\let\oldmarginpar\marginpar
\renewcommand\marginpar[1]{\-\oldmarginpar[\raggedleft\footnotesize #1]%
{\raggedright\footnotesize #1}}
\numberwithin{equation}{section}
\theoremstyle{plain}
\newtheorem{theorem}{Theorem}[section]
\newtheorem{lemma}[theorem]{Lemma}
\newtheorem{proposition}[theorem]{Proposition}
\theoremstyle{definition}
\theoremstyle{remark}
\newtheorem{remark}[theorem]{Remark}
\def\le{\leqslant}
\def\leq{\leqslant}
\def\ge{\geqslant}
\def\geq{\geqslant}
\def\phi{\varphi}
\def\rho{\varrho}
\def\vartheta{\theta}
\date{\today}
\begin{document}

\title[The Obstacle Problem Arising from the American Chooser Option]{The Obstacle Problem Arising from the American Chooser Option}

\author{Gugyum Ha}
\address{Gugyum Ha, Department of Mathematics, Sogang University, Seoul 04107, Republic of Korea}
\email{\texttt{ggha@sogang.ac.kr}}

\author{Junkee Jeon}
\address{Junkee Jeon, Department of  Applied Mathematics, Kyung Hee University, Yongin-Si 17104, Republic of Korea}
\email{\texttt{junkeejeon@khu.ac.kr}}

\author{Jihoon Ok}
\address{Jihoon Ok, Department of Mathematics, Sogang University, Seoul 04107, Republic of Korea}
\email{\texttt{jihoonok@sogang.ac.kr}}

\subjclass[2020]{35R35, 35K85, 60G40, 91G80} 
\keywords{American chooser option, Free boundary problem, Optimal stopping, Parabolic obstacle problem}

\begin{abstract}
We study the obstacle problem associated with the American chooser option. The obstacle is given by the maximum of an American call option and an American put option, which, in turn, can be expressed as the maximum of the solutions to the corresponding obstacle problems. This structure makes the obstacle problem particularly challenging and non-trivial. Using theoretical analysis, we overcome these difficulties and establish the existence and uniqueness of a strong solution. Furthermore, we rigorously prove the 
monotonicity and  smoothness of the free boundary arising from the obstacle problem.
\end{abstract}

\maketitle

\section{Introduction}

In this paper, our primary objective is to analyze the following (lower) obstacle problem derived from the American chooser option:
\begin{equation}\label{eqV*}
    \begin{cases}
        \partial_t V^{\rm ch}(t,s) + \mathfrak{L} V^{\rm ch}(t,s) \le 0  \quad  \text{for }\ (t,s)\in D_T\  \text{ with } \  V^{\rm ch}(t,s) = \max\{C^A(t,s),P^A(t,s)\},\\
        \partial_t V^{\rm ch}(t,s) + \mathfrak{L} V^{\rm ch}(t,s) = 0 \quad \text{for }\ (t,s)\in D_T\ \text{ with } \   V^{\rm ch}(t,s) > \max\{C^A(t,s),P^A(t,s)\},\\
        V^{\rm ch}(T,s) = \max\{C^A(T,s),P^A(T,s)\} \quad  \text{for } \ 0<s<\infty.
    \end{cases}
\end{equation}
where the differential operator $\mathfrak{L}$ is defined as
\[ \mathfrak{L}:=\frac{\sigma^2}{2}s^2\partial_{ss}+(r-q)s\partial_s-r, \quad \text{with} \quad r,\;\sigma>0,\;q\geq 0, \]
$D_\eta:=\{(t,s)\;:\; 0<t< \eta,\  0<s<\infty \}$ for $\eta>0$, and
the functions $C^A$ and $P^A$ satisfy the following obstacle problems,
respectively:
\begin{equation}\label{eqC*}
    \begin{cases}
        \partial_t C^A (t,s) + \mathfrak{L} C^A (t,s) \le 0 \quad
         \text{for }\ (t,s)\in D_{T_c} \ \text{ with }\  C^A(t,s)=(s-K_c)^+,\\
        \partial_t C^A (t,s) + \mathfrak{L} C^A (t,s) = 0 \quad \text{for }\ (t,s)\in D_{T_c} \ \text{ with }\  C^A(t,s)>(s-K_c)^+,\\
        C^A(T,s) = (s-K_c)^+  \quad \text{for }\ 0<s<\infty.
    \end{cases}
\end{equation}
\begin{equation}\label{eqP*}
 \begin{cases}
        \partial_t P^A (t,s) + \mathfrak{L} P^A (t,s) \le 0  \quad \text{for }\ (t,s)\in D_{T_p} \ \text{ with }\  P^A(t,s)=(K_p-s)^+,\\
        \partial_t P^A (t,s) + \mathfrak{L} P^A (t,s) = 0    \quad \text{for }\ (t,s)\in D_{T_p} \ \text{ with }\   P^A(t,s)>(K_p-s)^+,\\
        P^A(T,s) = (K_p-s)^+  \quad \text{for }\ 0<s<\infty.
    \end{cases}
\end{equation}
To ensure well-posedness, we assume:
\begin{equation*}
    0<T < \min\{T_c,T_p\}, \quad 0<K_p<K_c.
\end{equation*}
The existence and uniqueness of strong solutions to \eqref{eqC*} and \eqref{eqP*} are well-established, as shown in \cite{ZF09, ZF09-1}.

The American chooser option allows the holder to exercise it as either an American call or an American put option, depending on which is more valuable at the time of exercise. This flexibility makes it particularly useful in uncertain market conditions where large price movements are anticipated. Consequently, it shares similarities with the American strangle option and includes it as a special case.

From a mathematical perspective, pricing an American chooser option leads to a parabolic obstacle problem, where the obstacle function is determined as the maximum of two separate solutions—one corresponding to the American call and the other to the American put. Unlike standard American options, in which the obstacle function is explicitly defined and well-behaved, the American chooser option introduces additional complexity due to the implicit and non-standard nature of the obstacle. This fundamental distinction makes the problem significantly more challenging to analyze within a mathematical framework.


Standard American options have been extensively studied in the literature (e.g., \cite{YANG2006, Yi2008, ZF09-1, JJ19}). Most of these studies however rely on obstacle problems where the obstacle function has an explicit, closed-form representation. In contrast, the American chooser option involves an obstacle function that depends on the interaction between two separate obstacle problems. This interaction gives rise to a new level of mathematical difficulty, as the obstacle is no longer explicitly known but rather emerges as the maximum of two distinct solutions. More precisely, unlike the existing literature, our obstacle is given by the maximum of two strong solutions to separate obstacle problems. Therefore, one cannot directly apply the operator to the obstacle and compute it in a pointwise manner as in standard problems with an explicit payoff, and this feature makes the analysis highly challenging.


Earlier studies on the American chooser option, such as \cite{DETEMPLE09, Qiu2018}, adopted probabilistic approaches. In contrast, we present a rigorous analysis  within a PDE framework, which, to the best of our knowledge, has not been previously explored. Our work focuses on the associated obstacle problem and develops a detailed PDE-based methodology. To address the challenges mentioned above, we  carefully analyze the structure and behavior of the solutions to the two underlying obstacle problems, and as a result, we are able to construct a framework to study the American chooser option.

Specifically, following the methodology in \cite{JJ19}, we introduce a penalized problem to approximate the original obstacle problem. Given the irregular and implicit nature of the obstacle function in this setting, a regularized version of the problem is considered. By leveraging the properties of the obstacle function and employing advanced PDE techniques, including the comparison principle, we demonstrate that the penalized problem yields a uniformly bounded solution. Importantly, while the obstacle in \cite{JJ19} is explicitly defined, the obstacle in our case depends on two interacting obstacle problems. Despite this complexity, we show that strong solutions can still be constructed, enabling the application of the comparison principle and rigorous analysis.

A crucial part of our analysis concerns the structure of the free boundaries. Owing to the nature of the American chooser option, the associated obstacle problem gives rise to two time-dependent free boundaries. More precisely, the two components of the obstacle coincide with the relevant solutions on two separate and disjoint regions, similarly to \cite{JJ19}. However, unlike \cite{JJ19}, the obstacle is not explicitly known pointwise throughout the domain. 
The key observation is that these regions are contained in the respective exercise regions of the American put and call options. As a consequence, contact occurs only along the smooth payoff parts $e^x-K_c$ and $K_p-e^x$ of the obstacle, which allows us to employ standard arguments to establish the monotonicity and smoothness of the free boundaries.

Our paper is structured as follows. Section \ref{sec:2} provides the financial background and formal definition of the American chooser option and demonstrates the properties of the American call and put options. Section \ref{sec:3} establishes the existence and uniqueness of the obstacle problem solution using the penalty method. Section \ref{sec:4} examines the monotonicity and smoothness of the two free boundaries that arise in this framework.

\section{Preliminaries}\label{sec:2}

\subsection{Notations}

Throughout the paper, for each $\varepsilon>0$, $\varphi_\varepsilon\in C^\infty(\mathbb{R})$ is a function that satisfies
\begin{equation}\label{phiepsilon}
    \begin{cases}
    \varphi_\varepsilon \geq 0,\quad  0\leq \varphi'_\varepsilon \leq 1,\quad  \varphi''_\varepsilon \geq 0, \\
    \varphi_\varepsilon(\lambda)=\lambda \ \ \text{if} \ \ \lambda\geq \varepsilon, \quad \varphi_\varepsilon(\lambda)=0 \ \  \text{if} \ \ \lambda\leq -\varepsilon, 
    \\ 
   \varphi_\varepsilon(\lambda)\le (\lambda+\varepsilon)^+\ \ \text{for}\ \ \lambda\in\mathbb{R},
    \\
    \displaystyle{\lim_{\varepsilon \to 0}\varphi_\varepsilon(\lambda)}=\lambda^+ \ \ \text{uniformly for } 
    \ \lambda\in \mathbb{R}. 
    \end{cases}
\end{equation}

Let $\mathcal{D} \subset \mathbb{R}^{d+1}$ be a bounded parabolic domain, where $d \in \mathbb{N}$ denotes the dimension of the spatial variable $x$ and $t$ represents the time variable.
\begin{itemize}
    \item $C^{k+\frac{\alpha}{2},2k+\alpha}(\mathcal{D})$, $\alpha\in(0,1)$, $k\in\mathbb{N}$ is the Banach space under the following norm for $V$:
    \[\begin{split}
    &\Vert V\Vert_{C^{k+\frac{\alpha}{2},2k+\alpha}(\mathcal{D})}\\
    &\quad :=\sum_{i+2j=0,2,\cdots,2k}
    \limits\sup_{(t_1,x_1),(t_2,x_2)\in \mathcal{D}}
    \{\vert D^i_xD^j_tV(t_1,x_1)\vert
    +\frac{\vert D^i_xD^j_tV(t_1,x_1)-D^i_xD^j_tV(t_2,x_2)\vert}
    {{\vert t_1-t_2\vert}^{\frac{\alpha}{2}}+{\vert x_1-x_2 \vert}^\alpha}\}.
    \end{split}\]
    \item 
    $L^p(\mathcal{D}),p\geq1$ is the completion of $C^\infty(\mathcal{D})$
    under the following norm for $V$:
    \[\Vert V \Vert_{L^p(\mathcal{D})}:=
    \left(\iint_\mathcal{D} {\vert V(t,x)\vert}^p \,dx\, dt\right)^{\frac{1}{p}}.\]
    \item 
    $W^{1,2}_p(\mathcal{D}),$ $p\geq1$ is the completion of $C^{\infty}(\mathcal{D})$
    under the following norm for $V$:
    \[\Vert V \Vert_{W^{1,2}_p(\mathcal{D})}
    :=\left( \iint_\mathcal{D}\big\{\vert V \vert^p+\vert \partial_t V \vert^p
    +\vert \partial_x V \vert^p+\vert \partial^2_x V \vert^p\big\} \, dx\, dt\right)^{\frac{1}{p}}.\]
\end{itemize}
Let $(t_0,x_0)\in \mathcal{D}$
and $r>0$.
\begin{itemize}
    \item $Q((t_0,x_0),r)$
    is the cylinder such that
    \[Q((t_0,x_0),r):=\{(t,x)\in \mathcal{D}:
    \text{max}\{\vert x-x_0\vert, \vert t-t_0 \vert^{\frac{1}{2}}\}<r, t<t_0 \}.\]
    \item 
    $\mathcal{P}\mathcal{D}$ is the parabolic boundary of $\mathcal{D}$
    which is defined to be the set of all points $(t_0,x_0)\in \partial{\mathcal{D}}$
    such that for any $r>0,$ the cylinder 
    $Q((t_0,x_0),r)$
    contains points not in ${\mathcal{D}}$.
    \item 
    $\mathcal{BD}$ is the set of all points $(t,x)\in\mathcal{P}\mathcal{D}$ such that
    there is a positive $r>0$ with
    \[Q((t+r^2,x),r)\subset \mathcal{D}.\]
\end{itemize}

\subsection{Financial Background: American Chooser Option}


Let $(\Omega, \mathcal{G}, \mathbb{P})$ be a filtered probability space with a filtration $\{\mathcal{G}_t\}_{t \geq 0}$ satisfying the usual conditions. Under the risk-neutral measure $\mathbb{Q}$, the stock price $S_t$ follows a geometric Brownian motion (GBM) given by
\begin{equation*}
    dS_t = (r - q) S_t dt + \sigma S_t dW_t^{\mathbb{Q}}, \quad S_0 > 0,
\end{equation*}
where $r>0$ is the constant risk-free interest rate, $q \ge 0$ is the continuous dividend rate, $\sigma > 0$ is the volatility of the stock, and $W_t^{\mathbb{Q}}$ is a standard Brownian motion under the risk-neutral measure $\mathbb{Q}$.
 Throughout this paper, we assume $q > 0$ for simplicity. The case $q = 0$ leads to a degenerate situation with only one free boundary, but similar results can be obtained (see \cite{JJ19}).

To define the American chooser option, we first introduce the American call and put options. These two options serve as the fundamental building blocks for the American chooser option.
An \textit{American call option} with strike price $K_c$ and expiration time $T_c$ gives the holder the right to buy the underlying asset at price $K_c$ at any time $\theta \in \mathcal{U}_{t,T_c}$, where $\mathcal{U}_{t,T}$ is the set of all $\mathcal{G}$-stopping times taking values in $[t,T]$. The value $C^A(t,S_t)$ of the American call option at time $t$ is given by
\begin{equation}\label{def:CA}
    C^A(t, S_t) = \sup_{\theta \in \mathcal{U}_{t,T_c}} \mathbb{E}^{\mathbb{Q}} \left[ e^{-r(\theta - t)} (S_\theta - K_c)^+ \mid \mathcal{G}_t \right].
\end{equation}
Similarly, an \textit{American put option} with the strike price $K_p<K_c$ and expiration time $T_p$ grants the holder the right to sell the underlying asset at price $K_p$ at any time $\tau \in [0,T_p]$. The value $P^A(t,S_t)$ of the American put option at time $t$ is given by
\begin{equation}\label{def:PA}
    P^A(t, S_t) = \sup_{\theta \in \mathcal{U}_{t,T_p}} \mathbb{E}^{\mathbb{Q}} \left[ e^{-r(\theta - t)} (K_p - S_\theta)^+ \mid \mathcal{G}_t \right].
\end{equation}
If \( C^A \) and \( P^A \) are strong solutions to the obstacle problems \eqref{eqC*} and \eqref{eqP*}, respectively, then one can show, by applying It\^o's lemma for Sobolev spaces (see \cite{KRY80}), that they solve the corresponding optimal stopping problems \eqref{def:CA} and \eqref{def:PA}; see, for instance, \cite[Appendix B]{CYW11}.


Under these circumstances, the value of the American chooser option, denoted by $V^{\rm ch}$, is defined as the solution to the following optimal stopping problem:
\begin{equation}\label{V_ch}
    V^{\rm ch}(t,S_t) = \sup_{\theta \in \mathcal{U}_{t,T}}\mathbb{E}\left[e^{-r(\theta-t)} \max\left\{C^A(\theta,S_{\theta}),P^A(\theta,S_{\theta})\right\}\mid \mathcal{G}_t\right].
\end{equation}
Using dynamic programming and It\^o's lemma, we can easily derive the obstacle problem \eqref{eqV*} from \eqref{V_ch} (see Appendix \ref{Appendix.A}.). 

\subsection{Change of variables}
We transform them into forward non-degenerate problems and denote
the solutions by $\widehat{C}^A$ and $\widehat{P}^A$, respectively. More precisely, we introduce the following transformation:
\begin{equation*}
    \widehat{C}^A(\zeta, x) = C^A(T_c-\zeta, e^x) \quad \text{and} \quad 
    \widehat{P}^A(\zeta, x) = P^A(T_p-\zeta, e^x). 
\end{equation*}
Moreover, we define the domain $\Omega_{\eta}$
for a given constant $\eta>0$ as
\begin{equation*}
    \Omega_\eta := \{(\zeta,x) \mid 0<\zeta <\eta, \; x\in\mathbb{R}\}.
\end{equation*}
Then, $\widehat{C}^A$ and $\widehat{P}^A$ satisfy the following obstacle problems, respectively:
\begin{equation} \label{CO}
    \begin{cases}
        \partial_\zeta \widehat{C}^A(\zeta,x) -\mathcal{L} \widehat{C}^A(\zeta,x) \geq 0 \quad \text{for }\ (\zeta,x)\in \Omega_{T_c} \  \text{ with }\ \widehat{C}^A(\zeta,x) = (e^{x} - K_c)^+,\\
        \partial_\zeta \widehat{C}^A(\zeta,x) -\mathcal{L} \widehat{C}^A(\zeta,x) = 0 \quad \text{for }\ (\zeta,x)\in \Omega_{T_c} \  \text{ with }\   \widehat{C}^A(\zeta,x) > (e^{x} - K_c)^+,\\
        \widehat{C}^A(0,x) = (e^{x}-K_c)^+ \quad \text{for } \ x\in\mathbb{R},
    \end{cases}
\end{equation}
\begin{equation}\label{PO}
    \begin{cases}
        \partial_\zeta \widehat{P}^A(\zeta,x) -\mathcal{L} \widehat{P}^A(\zeta,x) \geq 0 \quad  \text{for } \ (\zeta,x)\in \Omega_{T_p} \ \text{ with }\  \widehat{P}^A(\zeta,x) = (K_p - e^x)^+,\\
        \partial_\zeta \widehat{P}^A(\zeta,x) -\mathcal{L} \widehat{P}^A(\zeta,x) = 0 \quad \text{for } \ (\zeta,x)\in \Omega_{T_p} \ \text{ with }\   \widehat{P}^A(\zeta,x) > (K_p - e^x)^+,\\
        \widehat{P}^A(0,x) = (K_p - e^x)^+ \quad  \text{for } \ x\in\mathbb{R},
    \end{cases}
\end{equation}
where the differential operator $\mathcal{L}$ is given by 
 \begin{equation*} 
\mathcal{L}:=\frac{\sigma^2}{2}\partial_{xx}+(r-q-\frac{\sigma^2}{2})\partial_x-r.
\end{equation*}
According to the vast literature on American options (see, for instance, \cite{BX09,JJ19,JIA05,PAS11,YJB06,ZF09,ZF09-1}), the obstacle problems \eqref{CO} and \eqref{PO} have unique strong solutions, 
$\widehat{C}^A \in W^{1,2}_{p\textup{,}\, \text{loc}}(\Omega_{T_c}) \cap C(\overline{\Omega_{T_c}})$ and  
$\widehat{P}^A \in W^{1,2}_{p\textup{,}\, \text{loc}}(\Omega_{T_p}) \cap C(\overline{\Omega_{T_p}})$, respectively.

We again perform the change of variables by setting 
$$
V(\tau,x) : = V^{\rm ch}(T-\tau, e^x).
$$
Then, $V(\tau,x)$ satisfies the following forward non-degenerate obstacle problem: 
\begin{equation}\label{AC}
    \begin{cases}
        \partial_\tau V(\tau,x) -\mathcal{L} V(\tau,x) \ge 0 \quad \text{for } \ (\tau,x)\in \Omega_T \ \text{ with }\  V(\tau,x)=\max\{C(\tau,x),\;P(\tau,x)\},\\
        \partial_\tau V(\tau,x) -\mathcal{L} V(\tau,x) = 0 \quad \text{for } \ (\tau,x)\in \Omega_T \ \text{ with } \  V(\tau,x)>\max\{C(\tau,x),\;P(\tau,x)\},\\
        V(0,x) = \max\{C(0,x),P(0,x)\} \quad \text{for }\ x\in\mathbb{R},
    \end{cases}
\end{equation}
where 
\begin{equation}\label{eq:CA}
   \begin{cases} C(\tau,x):=\widehat{C}^A(T_c-T+\tau,x)=C^A(T-\tau,e^x),\\ P(\tau,x):=\widehat{P}^A(T_p-T+\tau,x)=P^A(T-\tau,e^x), 
   \end{cases} \quad (\tau,x)\in \overline{\Omega_T}.
\end{equation}

\subsection{Properties of American Call and Put Options}
\label{Prop: C,P}
The payoff function of the American chooser option takes the maximum form of the American call option $C^A$ and the American put option $P^A$. Therefore, the properties of $C^A$ and $P^A$ are essential for the analysis of the American chooser option. Since the properties of $C^A$ and $P^A$ naturally extend to $\widehat{C}^A$ and $\widehat{P}^A$, we briefly summarize the well-known properties of the American call and American put options in terms of $\widehat{C}^A$ and $\widehat{P}^A$ for convenience.

In each obstacle problem \eqref{CO} and \eqref{PO}, 
it is well known that contact with the respective obstacles occurs only at their positive parts.
Thus, we define the exercise regions, denoted by $\mathcal{E}_C$ and $\mathcal{E}_P$, and the continuation regions, denoted by $\mathcal{C}_C$ and $\mathcal{C}_P$, as follows:
\[
\mathcal{E}_C:=\{(\zeta,x)\in \Omega_{T_c} \, : \, \widehat{C}^A(\zeta,x)=e^x-K_c\},\quad 
\mathcal{C}_C:=\{(\zeta,x)\in \Omega_{T_c} \, :\, \widehat{C}^A(\zeta,x)>(e^x-K_c)^+\},
 \]
\[        
\mathcal{E}_P:=\{(\zeta,x)\in \Omega_{T_p} \, :\, \widehat{P}^A(\zeta,x)=K_p-e^x\},\quad
\mathcal{C}_P:=\{(\zeta,x)\in \Omega_{T_p} \, :\, \widehat{P}^A(\zeta,x)>(K_p-e^x)^+\}.
\]
Then, the free boundaries $\hat{x}_c(\tau)$ for the American call and $\hat{x}_p(\tau)$ for the American put are well-defined as follows:
\begin{equation*}
    \hat{x}_c(\tau):=\partial\mathcal{E}_C=\inf\{x\in\mathbb{R}\, :\, (\tau,x)\in \mathcal{E}_C\}\quad \text{and}\quad \hat{x}_p(\tau):=\partial\mathcal{E}_P=\sup\{x\in\mathbb{R}\, :\, (\tau,x)\in \mathcal{E}_P\}.
\end{equation*}
It is well known that $\hat{x}_c(\tau)$ and $\hat{x}_p(\tau)$ satisfy the following properties (see \cite{Blanchet2006, Chen2012, Detemple2005, Peskir2005, ZF09, ZF09-1}):
\begin{itemize}
    \item $\hat{x}_p(\tau)$ is a smooth and strictly decreasing function for $\tau \in (0,T_p]$.
    \item $\hat{x}_c(\tau)$ is a smooth and strictly increasing function for $\tau \in (0,T_c]$.
    \item As $\tau$ approaches 0, the limiting behaviors of $\hat{x}_c(\tau)$ and $\hat{x}_p(\tau)$ are given by 
    \begin{equation*}
        \lim_{\tau \to 0^+} \hat{x}_c(\tau) = \ln{\left(\max\left\{1,\tfrac{r}{q}\right\}K_c\right)} 
        \quad \text{and} \quad
        \lim_{\tau \to 0^+} \hat{x}_p(\tau) = \ln{\left(\min\left\{1,\tfrac{r}{q}\right\}K_p\right)},
    \end{equation*}
    where $q>0$. If $q=0$, there does not exist the free boundary $\hat{x}_c(\tau)$ and  $ \lim_{\tau \to 0^+} \hat{x}_p(\tau) = \ln{K_p}$.
\end{itemize}
Additionally, the following property can also be obtained by \cite{Qiu2018}.
\begin{itemize}
    \item  There exists a unique $\bar{x}>0$ such that 
    \begin{equation} \label{barx}
        \widehat{C}^A(T_c-T, \bar{x}) = \widehat{P}^A(T_p-T,\bar{x}),\quad\text{or equivalently}\quad C(0,\bar{x})=P(0,\bar{x}).
    \end{equation}
\end{itemize}

According to the properties of the free boundaries of the American call and put options described above, since $K_c > K_p$, the following holds, regardless of the relationship between $T_c$ and $T_p$:
\begin{equation*}
    \hat{x}_c(\tau) > \hat{x}_p(\tau),\quad\quad\forall\;\tau\ge 0.
\end{equation*}
In other words, $\mathcal{E}_C$ and $\mathcal{E}_P$ are always disjoint.
Figure \ref{fig:main} illustrates the behaviors of the free boundaries $\hat{x}_c(\tau)$ and $\hat{x}_p(\tau)$. 
 \begin{figure}[ht]
    \centering
    \begin{subfigure}{0.45\textwidth} 
        \includegraphics[width=\linewidth]{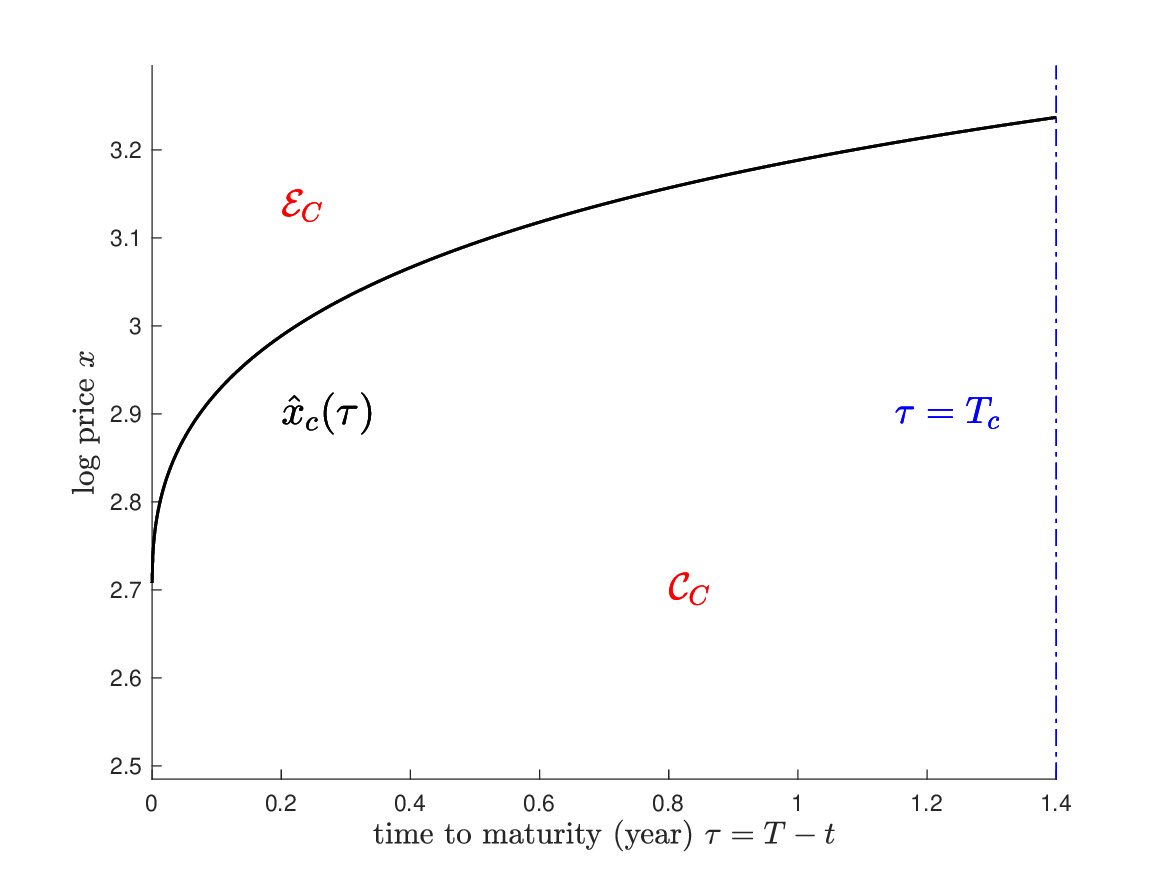}
        \caption{the free boundary $\hat{x}_c(\tau)$}
     \end{subfigure}
    \hfill
    \begin{subfigure}{0.45\textwidth}  
        \includegraphics[width=\linewidth]{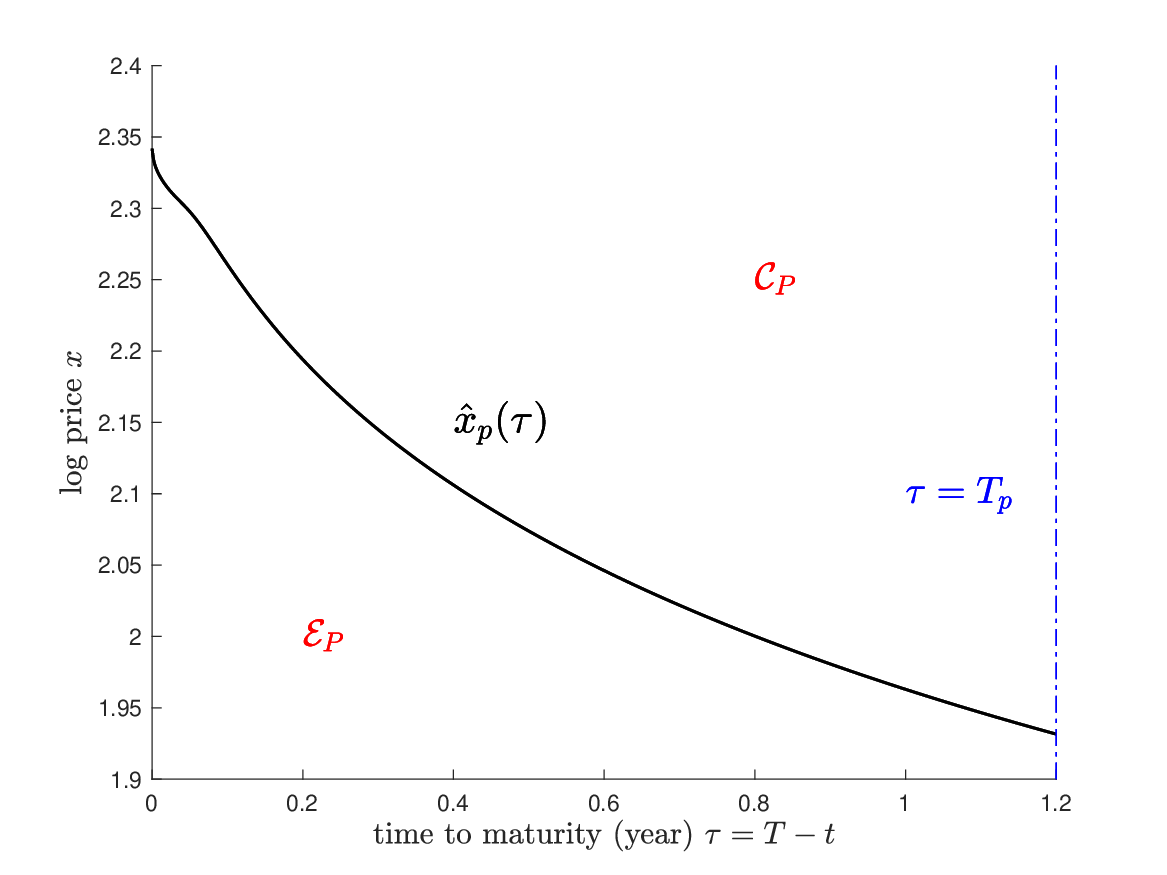}
                \caption{the free boundary $\hat{x}_p(\tau)$}
     \end{subfigure}
    \caption{The free boundaries $\hat{x}_c(\tau)$ and $\hat{x}_p(\tau)$. }
    \label{fig:main}
\end{figure}

\noindent In terms of $C$ and $P$, we also define $x_c(\tau)$ and $x_p(\tau)$ as follows:
\begin{equation*}
    x_c(\tau) := \hat{x}_c(T_c - T + \tau), \quad \text{and} \quad x_p(\tau) := \hat{x}_p(T_p - T + \tau).
\end{equation*}

For further discussion, it is necessary to  verify various properties of ${C}$ and ${P}$. 
Since ${C}$ and ${P}$ are restricted functions of $\widehat{C}^A$ and $\widehat{P}^A$, it is natural to analyze $\widehat{C}^A$ and $\widehat{P}^A$ over their original whole domains
and then conclude that the same properties hold for ${C}$ and ${P}$, respectively. The following results are essentially well-known and their proofs are provided in the Appendix. 
\begin{lemma}\label{lem:inequalityCP}
 Let $\widehat{C}^A$ and $\widehat{P}^A$ be the solution to \eqref{CO} and \eqref{PO}, respectively,
 and define for each $\lambda\in \mathbb{R}$ that $\Omega_{\lambda}:=(0,\lambda)\times\mathbb{R}$.
 Then the following properties hold.

\begin{enumerate}[label=(\roman*)]

\item 
$0 \le \widehat{C}^A \le e^x+2$  in  $\Omega_{T_c}$,
\quad
$0 \le \widehat{P}^A \le K_p+2$  in  $\Omega_{T_p}$.

\item 
$0\le \partial_x \widehat{C}^A \le e^x$  in  $\Omega_{T_c}$,
\quad
$-e^{x}\le \partial_x \widehat{P}^A \le 0$  in  $\Omega_{T_p}$.

\item 
$\partial_\zeta \widehat{C}^A \ge 0$  in  $\Omega_{T_c}$,
\quad
$\partial_\zeta \widehat{P}^A \ge 0$  in  $\Omega_{T_p}$.

\item 
For each $\tau\in (0,T]$, there exists $x_\tau\in \mathbb{R}$ such that
$C(\tau,x)>P(\tau,x)$ for all $x> x_\tau$.

\end{enumerate}

Moreover, by \eqref{eq:CA}, the corresponding statements hold for $C$ and $P$  in  $\Omega_T$.
\end{lemma}



\section{Existence and uniqueness of solution}\label{sec:3}

In this section, we aim to prove the existence and uniqueness of the solution to the obstacle problem \eqref{AC}.

\subsection{Existence and uniqueness in a bounded region}
We first consider the following obstacle problem in the bounded region $\Omega_T^n:=(0,T)\times(-n,n)$ for each $n\in\mathbb{N}$ with Neumann boundary condition:
\begin{align}\label{AC_R}
    \begin{cases}
    \partial_\tau V_n(\tau,x)-\mathcal{L}V_n(\tau,x)\geq 0 \quad 
   \text{for}\ \ (\tau,x)\in\Omega_T^n \ \ 
   \text{with} \ \ V_n(\tau,x)=J(\tau,x),  
   \\
   \partial_\tau V_n(\tau,x)-\mathcal{L}V_n(\tau,x)= 0 \quad 
   \text{for}\ \ (\tau,x)\in\Omega_T^n \ \ 
   \text{with} \ \ V_n(\tau,x)>J(\tau,x), 
   \\
   \partial_xV_n(\tau,-n)=-e^{-n} \quad \text{and} \quad \partial_xV_n(\tau,n)=e^n 
   \quad \text{for }\ \tau \in [0,T], \\
    V_n(0,x)=J(0,x) \quad \text{for }\ x\in (-n,n),
\end{cases}
\end{align}
where \[J(\tau,x):=\max\{C(\tau,x),P(\tau,x)\}\quad
\text{ for\ all }\ (\tau,x)\in\Omega_T^n.
\]
We prove the existence and uniqueness of the solution to \eqref{AC_R} in Theorem~\ref{VN}. To this end, we use the so called \textit{penalty method}.

Define a penalty function $\beta_{\varepsilon}(\cdot)\in C^{\infty}(\mathbb{R})$ with $\varepsilon>0$ satisfying
\begin{equation}\label{beta}
    \begin{cases}
    \beta_{\varepsilon}(\lambda)\leq 0,\ \beta'_{\varepsilon}(\lambda)\geq 0\ \ \text{and} \ \ \beta''_{\varepsilon}(\lambda)\leq 0\ \ \text{for \ all} \ \ \lambda\in \mathbb{R},
    \\
    \beta_{\varepsilon}(\lambda)=0 \ \ \text{if} \ \ \lambda\geq \varepsilon, \ \beta_{\varepsilon}(0)=-\mathcal{K}_0 \ \  \text{where} \ \ 
    \mathcal{K}_0:=2\{(q+r)e^n+2rK_c+5r\},
    \\
    \displaystyle{\lim_{\varepsilon \to 0}\beta_\varepsilon(\lambda)}=0
    \ \ \text{if} \ \  \lambda>0,\quad \displaystyle{\lim_{\varepsilon \to 0}\beta_\varepsilon(\lambda)}=-\infty  \ \ \text{if} \ \  \lambda<0
\end{cases}
\end{equation}
and $\varphi_\varepsilon(\cdot)\in C^{{\infty}}(\mathbb{R})$ satisfying \eqref{phiepsilon}.
We then consider the following penalized problem;
\begin{equation}\label{AC_P}
    \begin{cases}
    \partial_{\tau} V_{n,\varepsilon}-\mathcal{L} V_{n,\varepsilon}+\beta_\varepsilon(V_{n,\varepsilon}-J_{\varepsilon})=0 \quad \text{in }\ \Omega^{n}_T, 
    \\
    \partial_xV_{n,\varepsilon}(\tau,-n)=-e^{-n} 
    \quad \text{and} \quad
    \partial_x V_{n,\varepsilon}(\tau,n)=e^{n} \quad \text{for }\ \tau \in [0,T],
    \\
    V_{n,\varepsilon}(0,x)=J_{\varepsilon}(0,x) \quad \text{for }\ x\in (-n,n),
\end{cases}
\end{equation}
where
\begin{equation}\label{Jepsilon}
J_{\varepsilon}:=\varphi_\varepsilon(C-P)+P.
\end{equation}
\begin{theorem}\label{thm:existenceVnepsilon}
    For each fixed $n\in\mathbb{N}\setminus\{0\}$ with $n>\max\{ \vert {x_c}(0)\vert, \vert {x_p}(0)\vert \}$, there exists a solution $V_{n,\varepsilon}\in W^{1,2}_p(\Omega_T^n) \cap C(\overline{\Omega_T^n})$ to the problem (\ref{AC_P}), 
    where $1<p<\infty.$
\end{theorem}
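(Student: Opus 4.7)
The plan is to treat \eqref{AC_P} as a semilinear parabolic initial-boundary value problem with a bounded, Lipschitz nonlinearity $F(V):=-\beta_\varepsilon(V-J_\varepsilon)$ and reduce existence to a Banach fixed-point problem for the associated linear solution map, while obtaining uniqueness from the monotonicity $\beta_\varepsilon'\ge 0$. First I would verify compatibility of the initial datum $J_\varepsilon(0,\cdot)$ with the Neumann boundary conditions at the parabolic corners $(0,\pm n)$. The assumption $n>\max\{|x_c(0)|,|x_p(0)|\}$ puts $(0,n)$ strictly in the call-exercise region and outside the put-exercise region, so for $\varepsilon$ sufficiently small $C(0,x)-P(0,x)>\varepsilon$ in a neighborhood of $x=n$; hence $\varphi_\varepsilon(C-P)=C-P$ and $J_\varepsilon(0,x)=C(0,x)=e^x-K_c$ there, giving $\partial_x J_\varepsilon(0,n)=e^n$. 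The symmetric argument at $x=-n$ yields $\partial_x J_\varepsilon(0,-n)=-e^{-n}$, so the first-order compatibility needed for $W^{1,2}_p$-regularity is satisfied.

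Next I would define the solution operator $\mathcal{T}:C(\overline{\Omega_{\tau_0}^n})\to C(\overline{\Omega_{\tau_0}^n})$ by letting $\mathcal{T}(W)$ be the unique solution of the linear problem obtained from \eqref{AC_P} upon replacing the argument of $\beta_\varepsilon$ by $W$. The source $-\beta_\varepsilon(W-J_\varepsilon)$ is uniformly bounded by $\mathcal{K}_0$, and together with the compatibility just verified, classical $L^p$-theory for parabolic equations with Neumann data provides $\mathcal{T}(W)\in W^{1,2}_p(\Omega_{\tau_0}^n)\cap C(\overline{\Omega_{\tau_0}^n})$ for every $1<p<\infty$. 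Writing $L_\varepsilon:=\sup|\beta_\varepsilon'|$, the difference $w:=\mathcal{T}(W_1)-\mathcal{T}(W_2)$ has zero initial and Neumann data and right-hand side pointwise dominated by $L_\varepsilon|W_1-W_2|$. Comparing $w$ against the linear-in-$\tau$ barrier $\bar u(\tau,x):=L_\varepsilon\tau\|W_1-W_2\|_{L^\infty(\Omega_{\tau_0}^n)}$ via the parabolic maximum principle (whose interior part uses the nonnegative zero-order coefficient $r$ arising from $-r$ in $\mathcal{L}$, and whose boundary part uses Hopf's lemma to rule out extrema at $x=\pm n$) yields
\begin{equation*}
\|w\|_{L^\infty(\Omega_{\tau_0}^n)}\le L_\varepsilon\tau_0\|W_1-W_2\|_{L^\infty(\Omega_{\tau_0}^n)}.
\end{equation*}
Choosing $\tau_0$ with $L_\varepsilon\tau_0<1$ makes $\mathcal{T}$ a contraction, producing a unique fixed point $V_{n,\varepsilon}$ on $[0,\tau_0]$; iterating on successive intervals $[k\tau_0,(k+1)\tau_0]$ extends the fixed point to all of $\Omega_T^n$, and the $W^{1,2}_p$-regularity transfers back through the linear problem.

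For uniqueness on $\Omega_T^n$, if $V_1,V_2$ are two solutions then $w:=V_1-V_2$ satisfies $\partial_\tau w-\mathcal{L} w+\beta_\varepsilon'(\xi)w=0$ with zero initial and Neumann data, where $\beta_\varepsilon'(\xi)\ge 0$ by \eqref{beta}. The effective zero-order coefficient $r+\beta_\varepsilon'(\xi)$ is strictly positive, so the interior parabolic maximum principle combined with Hopf's lemma at $x=\pm n$ forces $w\equiv 0$. The main technical hurdle I anticipate is matching the initial and Neumann data compatibly at the parabolic corners so as to obtain $W^{1,2}_p$ solutions rather than mere viscosity ones; once this is achieved, the $\varepsilon$-dependence of $L_\varepsilon$ only necessitates the time-stepping contraction above (equivalently, one may contract in a single step using a weight $e^{-\lambda\tau}$ with $\lambda$ large), and the remainder reduces to standard linear parabolic $L^p$-theory and the maximum principle.
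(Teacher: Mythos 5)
Your proposal is correct in substance but follows a genuinely different route from the paper. The paper proves existence by the Schauder fixed point theorem: it freezes the argument of $\beta_\varepsilon$ at a given $w\ge 0$, solves the resulting linear Neumann problem, and verifies invariance, continuity, and precompactness of the solution map (the last via uniform $W^{1,2}_p$ bounds and the parabolic Sobolev embedding into $C^{\frac{\alpha}{2},\alpha}$, then Arzel\`a--Ascoli); uniqueness is not spelled out there. You instead run a Banach contraction on short time intervals, using the Lipschitz bound $L_\varepsilon=\sup|\beta_\varepsilon'|$ and a linear-in-$\tau$ barrier to get $\|\mathcal T(W_1)-\mathcal T(W_2)\|_\infty\le L_\varepsilon\tau_0\|W_1-W_2\|_\infty$, then iterate in time. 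Your approach is more elementary (no compactness argument needed), delivers existence and uniqueness simultaneously, and your separate uniqueness argument via the sign $\beta_\varepsilon'\ge 0$ and the maximum principle actually supplies a step the paper leaves implicit; the price is that you need $\beta_\varepsilon$ to be globally Lipschitz, which the hypotheses \eqref{beta} (smoothness, $\beta_\varepsilon'\ge0$, $\beta_\varepsilon''\le0$) do not literally guarantee as $t\to-\infty$. This is easily repaired --- either restrict the contraction to a set on which $W-J_\varepsilon$ is a priori bounded below (e.g.\ $\{W\ge 0\}$ on the bounded domain $\Omega_T^n$, where $J_\varepsilon$ is bounded), or observe that the standard penalty construction is affine for large negative arguments --- but it should be said. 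Your corner-compatibility discussion is a welcome addition the paper omits, though the claim that $J_\varepsilon(0,\cdot)=C(0,\cdot)$ near $x=n$ requires $C(0,n)-P(0,n)\ge\varepsilon$, i.e.\ $n>\bar x$ plus a quantitative margin, which does not follow solely from $n>\max\{|x_c(0)|,|x_p(0)|\}$ and deserves a line of justification (or an enlargement of $n$).
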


\begin{proof} 
We apply the Schauder fixed point theorem \cite[280p]{DN01} to the following setting.
Set $\mathcal{X}:=C(\overline{\Omega_T^n})$ and $\mathcal{A}:=\{w\in \mathcal{X}:w\geq 0 \}$.
Then $\mathcal{A}$ is a closed convex subset of the Banach space $\mathcal{X}.\ $ 
For each $w\in \mathcal{A}$, let $u\in W^{1,2}_p(\Omega_T^n)$
be the solution to
\begin{equation}\label{PDE}
    \begin{cases}
    \partial_{\tau} u-\mathcal{L} u+\beta_\varepsilon(w-J_{\varepsilon})=0 \quad \text{in }\ \Omega^{n}_T, \\
    \partial_xu(\tau,-n)=-e^{-n} \quad \text{and} \quad
    \partial_x u(\tau,n)=e^{n} \quad \text{for }\ \tau \in [0,T],
    \\
    u(0,x)=J_{\varepsilon}(0,x) \quad \text{for }\ x\in (-n,n).
\end{cases}
\end{equation}
Note that the well-posedness of the solution to \eqref{PDE}
and the relevant estimate in $W^{1,2}_p(\Omega_T^n)$ for $1<p<\infty$ can be found in \cite[Theorem 9.1, p.341]{LSU68}:
\[ \Vert u \Vert_{W^{1,2}_p(\Omega_T^n)} 
\leq 
\mathcal{K}\left(T(e^n+e^{-n})
 +\Vert J_{\varepsilon} \Vert_{W^{2}_p((-n,n))}
 +\Vert \beta_\varepsilon(w-J_{\varepsilon})\Vert_{L^p(\Omega_T^n)}\right)\]
for some constant $\mathcal{K}>0$.
Moreover, the parabolic Sobolev embedding theorem \cite[Chapter II. Lemma 3.3]{LSU68} yields that $u$ is H\"older continuous in $\overline{\Omega_T^n}$.
Thus $u\in\mathcal{X}.$
Now we define an operator $\mathcal{F}:\mathcal{A}\rightarrow\mathcal{X}$ by
$\mathcal{F}(w):=u$.
In order to use the Schauder fixed point theorem in $\mathcal{X},$ it suffices to show the following three properties:
\begin{itemize}
    \item[(1)] $\mathcal{F}(\mathcal{A})\subset \mathcal{A}$;
    \item[(2)] $\mathcal{F}$ is continuous;
    \item[(3)] $\mathcal{F}(\mathcal{A})$ is precompact in $\mathcal{X}$.
\end{itemize}
We shall prove it in Appendix \ref{Appendix.D}.
Utilizing the Schauder fixed point theorem, we obtain the solution $V_{n,\varepsilon}$ of the
problem (\ref{AC_P}). In particular, $V_{n,\varepsilon}\in W^{1,2}_p(\Omega^n_T)\cap C(\overline{\Omega^n_T})$
for each $1<p<\infty.$
\end{proof}

Next, we investigate the convergence of $V_{n,\varepsilon}$ as $\varepsilon \to 0^+$. 
We establish that the limit exists and is a solution to the obstacle problem \eqref{AC_R}.

\begin{lemma}\label{UBL}
 Let $V_{n,\varepsilon}$ be the solution of (\ref{AC_P}).
 Then for each $n\in \mathbb{N},$\  
 there exists some constant $\mathcal{K}=\mathcal{K}(n)>0$ independent of $\varepsilon\in (0,1)$ such that 
 \begin{align*}
     -\mathcal{K} \le \beta_\varepsilon(V_{n,\varepsilon}-J_{\varepsilon})\leq 0
     \quad \text{and}\quad
     0 \le  V_{n,\varepsilon} \leq \mathcal{K} \quad \text{in } \ \Omega_T^n.
 \end{align*}
\end{lemma}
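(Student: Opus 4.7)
My proof proposal: the four inequalities come from the parabolic comparison principle applied to \eqref{AC_P}, with Hopf's boundary-point lemma handling the Neumann conditions at $x = \pm n$. The easy parts are $\beta_\varepsilon(V_{n,\varepsilon} - J_\varepsilon) \le 0$ (immediate from \eqref{beta}) and $V_{n,\varepsilon} \ge 0$: since $\beta_\varepsilon \le 0$, \eqref{AC_P} gives $\partial_\tau V_{n,\varepsilon} - \mathcal{L}V_{n,\varepsilon} \ge 0$, while $V_{n,\varepsilon}(0,\cdot) = J_\varepsilon(0,\cdot) \ge 0$ and the signs of the Neumann values rule out a negative minimum via the usual maximum-principle plus Hopf argument. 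What remains are the bounds $\beta_\varepsilon(V_{n,\varepsilon}-J_\varepsilon) \ge -\mathcal{K}$ and $V_{n,\varepsilon} \le \mathcal{K}$, which I obtain by constructing a subsolution and a supersolution, respectively.

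For the lower bound on the penalty, it suffices to show $V_{n,\varepsilon} \ge J_\varepsilon$, since then monotonicity of $\beta_\varepsilon$ gives $\beta_\varepsilon(V_{n,\varepsilon}-J_\varepsilon) \ge \beta_\varepsilon(0) = -\mathcal{K}_0$. The crux is to prove that $J_\varepsilon$ is itself a subsolution of \eqref{AC_P}. Direct differentiation of $J_\varepsilon = \varphi_\varepsilon(C-P) + P$, together with $\mathcal{L}(C-P) = \mathcal{L}C - \mathcal{L}P$, yields
\[
\partial_\tau J_\varepsilon - \mathcal{L}J_\varepsilon = \varphi'_\varepsilon(C{-}P)(\partial_\tau C - \mathcal{L}C) + (1-\varphi'_\varepsilon(C{-}P))(\partial_\tau P - \mathcal{L}P) - \tfrac{\sigma^2}{2}\varphi''_\varepsilon(C{-}P)(\partial_x C - \partial_x P)^2 + r\bigl[\varphi_\varepsilon(C{-}P) - (C{-}P)\varphi'_\varepsilon(C{-}P)\bigr].
\]
The first two terms are non-negative and, using the explicit form of $C$ and $P$ on their exercise regions ($\partial_\tau C - \mathcal{L}C = qe^x - rK_c$ on $\mathcal{E}_C$ and $\partial_\tau P - \mathcal{L}P = rK_p - qe^x$ on $\mathcal{E}_P$), are bounded on $\Omega_T^n$ by $qe^n$ and $rK_p$ respectively. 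The third term is non-positive because $\varphi''_\varepsilon \ge 0$. The fourth term has magnitude $O(\varepsilon)$ since $\varphi_\varepsilon(t) - t\varphi'_\varepsilon(t)$ vanishes outside $[-\varepsilon, \varepsilon]$ and is bounded there by $3\varepsilon$. Summing, $\partial_\tau J_\varepsilon - \mathcal{L}J_\varepsilon \le \mathcal{K}_0$, i.e., $\partial_\tau J_\varepsilon - \mathcal{L}J_\varepsilon + \beta_\varepsilon(0) \le 0$, establishing the subsolution property. Setting $w := V_{n,\varepsilon} - J_\varepsilon$, the mean value theorem linearizes the penalty to give $\partial_\tau w - \mathcal{L}w + c(\tau,x)\,w \ge 0$ with $c = \beta'_\varepsilon(\xi) \ge 0$; the initial data give $w(0,\cdot) = 0$; the parabolic maximum principle excludes negative interior minima; and at $x = \pm n$ Hopf's lemma excludes boundary minima because $\partial_x J_\varepsilon = \varphi'_\varepsilon(C{-}P)\partial_x C + (1-\varphi'_\varepsilon(C{-}P))\partial_x P \in [-e^x, e^x]$ by Lemma \ref{lem:inequalityCP}(ii), which forces $\partial_x w(-n) \le 0$ and $\partial_x w(n) \ge 0$ - the wrong signs for a boundary minimum. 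Hence $V_{n,\varepsilon} \ge J_\varepsilon$.

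For the upper bound on $V_{n,\varepsilon}$ I take the stationary barrier $\overline{V}(x) := e^x + cx^2 + B$, choosing $c$ (depending only on $n$) so that $\partial_x \overline{V}(-n) \le -e^{-n}$ and $\partial_x \overline{V}(n) \ge e^n$ (making Hopf block a boundary maximum of $V_{n,\varepsilon} - \overline{V}$), and then $B$ (depending on $n$ but not on $\varepsilon$) large enough that $\overline{V} - J_\varepsilon \ge \varepsilon$ on $\Omega_T^n$ (so that $\beta_\varepsilon(\overline{V}-J_\varepsilon) \equiv 0$) and $-\mathcal{L}\overline{V} \ge 0$ on $\Omega_T^n$. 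This makes $\overline{V}$ a supersolution, and the comparison principle yields $V_{n,\varepsilon} \le \overline{V} \le \mathcal{K}$. The main obstacle in the whole argument is the subsolution estimate in the preceding paragraph: the true obstacle $J = \max(C,P)$ has a convex kink along the crossover $\{C = P\}$, so its distributional second derivative carries a positive Dirac mass and $J$ itself fails to be a classical subsolution. Smoothing via $\varphi_\varepsilon$ replaces that Dirac by the bounded but potentially large expression $\tfrac{\sigma^2}{2}\varphi''_\varepsilon(C-P)(\partial_x(C-P))^2$ in $\mathcal{L}J_\varepsilon$. The key realization making the proof work is that this dangerous term enters $\partial_\tau J_\varepsilon - \mathcal{L}J_\varepsilon$ with the \emph{favorable} (non-positive) sign and so can simply be discarded in the one-sided estimate.
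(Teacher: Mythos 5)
Your proof is correct and follows essentially the same route as the paper: the identity for $\partial_\tau J_\varepsilon-\mathcal{L}J_\varepsilon$, the observation that the $\varphi''_\varepsilon$ term enters with a favorable sign, the comparison principle giving $V_{n,\varepsilon}\ge J_\varepsilon$ (hence the penalty bound via monotonicity of $\beta_\varepsilon$), and the barrier $e^x+cx^2+B$ for the upper bound are all exactly the paper's argument, which takes $c=e^{-n}$ and $B=\mathcal{K}_1$. Your sharper $O(\varepsilon)$ estimate of $r\bigl[\varphi_\varepsilon(C-P)-(C-P)\varphi'_\varepsilon(C-P)\bigr]$ is a minor refinement of the paper's cruder bound $2r(\max|C-P|+1)$ but changes nothing structurally.
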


\begin{proof}

From  Lemma \ref{lem:inequalityCP} (ii),
we note
\[0\leq \partial_xC \leq e^x
\ \ \text{and}\ \ 
-e^x\leq \partial_x P\leq 0
\quad \text{in}\ \ \Omega^n_T.
\]
Next, observe that
$\partial_xJ_{\varepsilon}=
\{1-\varphi'_\varepsilon(C-P)\}\partial_x P+
\varphi'_\varepsilon(C-P)\partial_xC $ and
$0\leq \varphi'_\varepsilon(\cdot)\leq 1$.
Combining these estimates, we deduce
\begin{equation}\label{DJ}
    -e^x\leq \partial_xJ_{\varepsilon} \leq e^x
    \quad \text{in} \ \ \Omega^n_T.
\end{equation}
Moreover, by
Lemma \ref{lem:inequalityCP} (i), we have
\[\vert C -P \vert \leq C+P\leq e^n+K_p+4 
\quad\text{in}\ \ \Omega_T^n \]
and since $C,P\in W^{1,2}_{p,\text{loc}}(\Omega_T)$,
it follows that almost everywhere in $\Omega_T^n$,
\begin{align*}
    \partial_\tau C-\mathcal{L}C = (qe^x-rK_c)\mathbf{1}_{\{C=e^x-K_c\}}
    \quad \text{and} \quad
    \partial_\tau P-\mathcal{L}P = (-qe^x+rK_p)\mathbf{1}_{\{P=K_p-e^x\}}.
\end{align*}
Direct computation and the choice of $\mathcal{K}_0$ yield the following in $\Omega_T^n$:
\begin{align*}
\partial_\tau J_\varepsilon - \mathcal{L} J_\varepsilon
&= \varphi'_\varepsilon(C-P) \left( \partial_\tau C - \mathcal{L}C \right)
 + \left( 1 - \varphi'_\varepsilon(C-P) \right) \left( \partial_\tau P - \mathcal{L}P \right) \\
&\quad - \frac{\sigma^2}{2} \varphi''_\varepsilon(C-P) \left[ \partial_x(C-P) \right]^2 
 - r\varphi'_\varepsilon(C-P)(C-P) + r\varphi_\varepsilon(C-P) \\
&\le \varphi'_\varepsilon(\cdot) |qe^n - rK_c| + (1 - \varphi'_\varepsilon(\cdot)) | -qe^n + rK_p | \\
&\quad + 2r \left( \max_{\Omega_T^n} |C-P| + 1 \right) \\
&\le 2(qe^n + rK_c) + 2r(e^n + K_p + 5) \\
&\le 2(q+r)e^n + 4rK_c + 10r \\
&= \mathcal{K}_0 = -\beta_\varepsilon(0)
\end{align*}
for sufficiently small $0 < \varepsilon < 1$.
Consequently, we deduce
\begin{equation*}
     \begin{cases}
    \partial_{\tau}J_{\varepsilon}-\mathcal{L} J_{\varepsilon}+\beta_\varepsilon(0)\leq0 \quad \text{in }\  \Omega^{n}_T, 
    \\ \partial_xJ_{\varepsilon}(\tau,-n)\geq -e^{-n}
    =\partial_xV_{n,\varepsilon}(\tau,-n) \quad
    \text{and}
    \\
    \partial_x J_{\varepsilon}(\tau,n)\leq e^{n}= \partial_xV_{n,\varepsilon}(\tau,n) \quad \text{for }\ \tau\in [0,T],
    \\ J_{\varepsilon}(0,x)=V_{n,\varepsilon}(0,x) \quad \text{for }\ x\in (-n,n).
    \end{cases}
\end{equation*}
Hence, $J_{\varepsilon}$ is a subsolution to (\ref{AC_P}) 
and by the comparison principle \cite[Theorem 17, p.53]{FRI64}
\begin{equation*}
V_{n,\varepsilon}\geq J_{\varepsilon} \geq 0 \quad \text{in }\  \Omega_T^n.    
\end{equation*}
Moreover, the monotonicity of $\beta_\varepsilon$ implies
\[-\mathcal{K}_0=\beta_{\varepsilon}(0)\leq \beta_\varepsilon (V_{n,\varepsilon}-J_{\varepsilon})\leq 0.\]
We next show that $V_{n,\varepsilon}$ is bounded from above.
To this end, define
\[\overline{V}(\tau,x)
:= e^x + e^{-n}x^2 + \mathcal{K}_1,\]
where $\mathcal{K}_1$ is a constant satisfying
\[\mathcal{K}_1
\ge
\max\left\{
\bigg(1-\frac{q}{r}-\frac{\sigma^2}{2r}\right)^2
+\frac{\sigma^2}{r},
\; K_p+6\bigg\}.\]
This implies $\overline{V}\geq J_{\varepsilon}+\varepsilon$
and hence $\beta_\varepsilon(\overline{V}-J_{\varepsilon})=0$
in $\Omega_T^n.$
Moreover in $\Omega_T^n$,
\[
\partial_{\tau} \overline{V}-\mathcal{L} \overline{V} 
= qe^x+e^{-n}\{rx^2-(2r-2q-\sigma^2)x-\sigma^2\}+r\mathcal{K}_1\geq0.
\]
Thus, $\overline{V}$ satisfies
\begin{equation*}
\begin{cases}
    \partial_{\tau} \overline{V}-\mathcal{L} \overline{V}+\beta_\varepsilon(\overline{V}-J_{\varepsilon})\geq0 \quad \text{in }\ \Omega^n_T,
    \\
    \partial_x\overline{V}(\tau,-n) 
    = (1-2n)e^{-n}\leq\partial_xV_{n,\varepsilon}(\tau,-n)
    \quad \text{and}
    \\ \partial_x \overline{V}(\tau,n)= 2ne^{-n}+e^n
    \geq \partial_x V_{n,\varepsilon}(\tau,n) \quad \text{for }\ \tau \in [0,T],
    \\
    \overline{V}(0,x)\geq V_{n,\varepsilon}(0,x) \quad  \text{for }\ x\in (-n,n).
\end{cases}
\end{equation*}
By the comparison principle \cite[Theorem 17, p.53]{FRI64},
        \[V_{n,\varepsilon}\leq \overline{V} \leq e^n+e^{-n}n^2+\mathcal{K}_1
        \quad  \text{in} \ \ \Omega^{n}_T. \qedhere \]
\end{proof}

We now establish the existence of the solution $V_n$ to problem \eqref{AC_R}. 
To this end, let us recall the unique point $\bar{x}$ satisfying $C(0, \bar{x}) = P(0, \bar{x})$ from \eqref{barx}. 
For each $\rho > 0$, we define the ball $B_\rho(0, \bar{x}) := \{(\tau, x) \in \mathbb{R}^2 : |(\tau, x) - (0, \bar{x})| < \rho\}$.

\begin{theorem} \label{VN}
   For each fixed $n\in\mathbb{N}$ with $n>\max\{ \vert {x_c}(0)\vert, \vert {x_p}(0)\vert \}$ and $\rho>0$,
    there exists a solution $V_n\in 
    W^{1,2}_p(\Omega^{n}_T\setminus B_\rho(0,\bar x))$ to the problem (\ref{AC_R}), where $1<p<\infty$.
    Moreover, $V_n\in C(\overline{\Omega^{n}_T})$.
\end{theorem}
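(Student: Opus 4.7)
The plan is to construct $V_n$ as the limit of the penalized solutions $V_{n,\varepsilon}$ from Theorem \ref{thm:existenceVnepsilon} as $\varepsilon\to 0^+$, using the uniform bounds from Lemma \ref{UBL}. Rewriting \eqref{AC_P} as the linear parabolic Neumann problem
\[
\partial_\tau V_{n,\varepsilon}-\mathcal{L} V_{n,\varepsilon}=f_\varepsilon,\qquad f_\varepsilon:=-\beta_\varepsilon(V_{n,\varepsilon}-J_\varepsilon),
\]
Lemma \ref{UBL} gives $\|V_{n,\varepsilon}\|_{L^\infty(\Omega_T^n)}+\|f_\varepsilon\|_{L^\infty(\Omega_T^n)}\leq \mathcal{K}$ uniformly in $\varepsilon\in(0,1)$. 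Standard parabolic theory then yields a uniform parabolic H\"older estimate $\|V_{n,\varepsilon}\|_{C^{\alpha/2,\alpha}(\overline{\Omega_T^n})}\leq \mathcal{K}$ for some $\alpha\in(0,1)$, and applying the $W^{1,2}_p$ theory of \cite{LSU68} together with the fact that the initial trace $J(0,\cdot)=\max\{(e^x-K_c)^+,(K_p-e^x)^+\}$ fails to lie in $W^{2-2/p}_p$ (for $p$ large) only at the crossing point $\bar x$ yields the uniform estimate $\|V_{n,\varepsilon}\|_{W^{1,2}_p(\Omega_T^n\setminus B_\rho(0,\bar x))}\leq \mathcal{K}_\rho$ for every $\rho>0$ and $1<p<\infty$.

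\textbf{Passage to the limit.} By Arzel\`a--Ascoli and a diagonal argument, I extract a subsequence $\varepsilon_k\to 0^+$ and a function $V_n\in C(\overline{\Omega_T^n})$ with $V_n\in W^{1,2}_p(\Omega_T^n\setminus B_\rho(0,\bar x))$ for every $\rho>0$, such that $V_{n,\varepsilon_k}\to V_n$ uniformly on $\overline{\Omega_T^n}$ and weakly in $W^{1,2}_p(\Omega_T^n\setminus B_\rho(0,\bar x))$. Since $V_{n,\varepsilon}\geq J_\varepsilon$ (Lemma \ref{UBL}) and $J_\varepsilon\to J$ uniformly by \eqref{phiepsilon}, we obtain $V_n\geq J$. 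On any open set $U\Subset\{V_n>J\}$, uniform convergence yields $V_{n,\varepsilon_k}-J_{\varepsilon_k}\geq\varepsilon_k$ eventually, so $f_{\varepsilon_k}\equiv 0$ on $U$ by the definition of $\beta_{\varepsilon_k}$, and the weak limit of \eqref{AC_P} gives $\partial_\tau V_n-\mathcal{L} V_n=0$ a.e.\ on $U$. On the coincidence set, the inequality $f_\varepsilon\geq 0$ from Lemma \ref{UBL} passes to the weak limit to give $\partial_\tau V_n-\mathcal{L} V_n\geq 0$ a.e. The Neumann conditions and the initial datum $V_n(0,\cdot)=J(0,\cdot)$ are preserved by uniform convergence and the trace theorem.

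\textbf{Uniqueness and main difficulty.} Uniqueness follows from a standard variational-inequality comparison in the spirit of \cite[53p]{FRI64}: for two strong solutions $V_n^{(1)},V_n^{(2)}$, testing the equation satisfied by $V_n^{(1)}-V_n^{(2)}$ against its positive part, distinguishing the coincidence and continuation sets of each solution, and invoking matching Neumann and initial data forces $V_n^{(1)}=V_n^{(2)}$. The principal difficulty throughout is the lack of smoothness of the obstacle at the crossing point: the initial datum $J(0,\cdot)$ has a genuine corner at $\bar x$ (the other initial kinks at $\ln K_p,\ln K_c$ are harmless because they lie within the exercise regions, where the obstacle coincides with the smooth function $K_p-e^x$ or $e^x-K_c$ and smooth fit keeps $V_n$ regular), so no global $W^{1,2}_p$ bound can be hoped for when $p>2$; one must settle for the weaker regularity away from $B_\rho(0,\bar x)$ asserted in the theorem.
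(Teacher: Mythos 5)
Your existence argument follows the paper's route essentially step for step: pass to the limit $\varepsilon\to0^+$ in the penalized problem \eqref{AC_P}, using the uniform bounds of Lemma \ref{UBL} to get uniform H\"older estimates on $\overline{\Omega^n_T}$ and uniform $W^{1,2}_p$ estimates on $\Omega^n_T\setminus B_\rho(0,\bar x)$, then Arzel\`a--Ascoli plus weak compactness, and finally the observation that $\beta_\varepsilon(V_{n,\varepsilon}-J_\varepsilon)$ vanishes on compact subsets of $\{V_n>J\}$ once $\varepsilon$ is small, while $-\beta_\varepsilon\geq 0$ gives the supersolution inequality globally (not only on the coincidence set, as you phrase it, but this is immaterial since the two statements coincide off $\{V_n>J\}$). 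The one genuine divergence is uniqueness. The paper argues pointwise: on the open set $\mathcal N=\{\tilde V_n>V_n\}$ the larger solution solves the equation and the smaller is a supersolution, and the comparison principle with the oblique boundary operator $\mathcal M$ on $\mathcal P\mathcal N$ forces $\mathcal N=\emptyset$. Your proposal is the classical variational-inequality energy argument (test against $(V_n^{(1)}-V_n^{(2)})^+$ and apply Gronwall). Both are standard and both work; the energy route avoids having to set up the oblique-derivative comparison on the irregular set $\mathcal N$, at the mild cost of justifying the integration by parts near the low-regularity point $(0,\bar x)$ (e.g.\ integrate over $(\delta,T)\times(-n,n)$ and let $\delta\to0^+$ using continuity of the solutions).

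One factual correction that does not affect the structure of your argument: the initial datum of \eqref{AC_R} is $J(0,\cdot)=\max\{C(0,\cdot),P(0,\cdot)\}$, not $\max\{(e^x-K_c)^+,(K_p-e^x)^+\}$. Since $T<\min\{T_c,T_p\}$, the functions $C(0,\cdot)=\widehat C^A(T_c-T,\cdot)$ and $P(0,\cdot)=\widehat P^A(T_p-T,\cdot)$ are American option values with strictly positive remaining maturity, hence $C^1$ in $x$ by smooth fit and $W^{2,p}_{\rm loc}$; there are no kinks at $\ln K_c$ or $\ln K_p$ to dismiss. The only singularity of $J(0,\cdot)$ is the genuine corner at the crossing point $\bar x$ of \eqref{barx}, which is exactly why the $W^{1,2}_p$ bound (and the statement of the theorem) excises $B_\rho(0,\bar x)$; your conclusion is right, but for this reason rather than the one you give.
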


\begin{proof}
Since $J(0,x)=\max\{C(0,x),P(0,x)\}$ is not differentiable at the point $(0,\bar{x})$,
we consider the following $W^{1,2}_p$-estimate in the domain $\Omega^{n}_T \setminus B_\rho(0,\bar{x})$ 
for $1 < p < \infty$ and $\rho > 0$:
\begin{align*}
    \Vert V_{n,\varepsilon} \Vert_{W^{1,2}_p(\Omega^{n}_T\setminus B_\rho(0,\bar x))}
    &\leq \mathcal{K}
    \Big(\Vert V_{n,\varepsilon}\Vert_{L^\infty(\Omega^n_T)} +\Vert \beta_\varepsilon(V_{n,\varepsilon}-J_{\varepsilon}) \Vert_{L^\infty(\Omega^n_T)}+T(e^n+e^{-n})
        \\
    &\quad\quad 
    +\Vert \varphi_\varepsilon (C-P) +P \Vert
    _{W^2_p((-n,n)\setminus (\bar x -\rho/2,\bar x+\rho/2))}\Big)
    \\
    &\leq \mathcal{K}'.
\end{align*}
By the Sobolev embedding theorem, we deduce that
for every $(\tau_0,x_0)\in \overline{\Omega_T^n}\setminus\{(0,\bar{x})\}$, $V_{n,\varepsilon}$ is $C^{\frac{\alpha}{2},\alpha}$ in some neighborhood of $(\tau_0,x_0)$ in $\overline{\Omega_T^n}$.
Note that the constant $\mathcal{K}'$ is independent of   $\varepsilon\in (0,\rho/2)$ 
due to Lemma \ref{UBL} and the property of $\varphi_\varepsilon$ in \eqref{phiepsilon}.
Therefore, for each large $n\in\mathbb{N}$, 
$V_{n,\varepsilon}$ is bounded in $W^{1,2}_p(\Omega^{n}_T\setminus B_\rho(0,\bar{x}))\cap C(\overline{\Omega^{n}_T\setminus B_\rho(0,\bar{x})})$ for $\varepsilon\in (0,\rho/2)$, and hence
there exists $V_n\in W^{1,2}_p(\Omega^{n}_T\setminus B_\rho(0,\bar{x}))\cap C(\overline{\Omega^{n}_T\setminus B_\rho(0,\bar{x})})$ for all $1<p<\infty$ and $\rho>0$ such that
\begin{align}
    V_{n,\varepsilon} \rightharpoonup V_n \quad  &\text{in} 
    \quad W^{1,2}_p(\Omega^{n}_T\setminus B_\rho(0,\bar x)),\label{WC_1}
    \\
    V_{n,\varepsilon} \rightarrow V_n \quad  &\text{in} 
    \quad C(\overline{\Omega^{n}_T\setminus B_\rho(0,\bar x)}),\label{WC_2}
\end{align}
up to a subsequence, as $\varepsilon \rightarrow 0^+$.

Now, we prove that $V_n$ satisfies \eqref{AC_R}.
Observe from the inequality $V_{n,\varepsilon}\ge J_\varepsilon$, the uniform convergence $V_{n,\varepsilon}\to V_n$ and $J_\varepsilon\to J$ in $\Omega_T^n$, as $\varepsilon\to 0^+$, yields
$V_n\ge J$ in $\Omega_T^n$.
Fix any 
$\psi \in C_c^\infty (\{V_n>J\}\cap\Omega^{n}_T)$. Then, there exist small $\rho,\delta>0$ such that 
\[
\text{supp}(\psi) \subset \{V_n>J\}\cap\Omega^{n}_T\setminus B_\rho(0,\bar{x})
\quad \text{and}\quad
  \min\limits_{\text{supp}(\psi)}(V_{n}-J)>2\delta.
\]
Choose $\varepsilon_0>0$ small so that for every $\varepsilon\in (0,\varepsilon_0]$,
\[ \vert (V_{n,\varepsilon}-J_{\varepsilon})-(V_n-J) \vert<\delta
\quad  \text{and} \quad
\varepsilon <\delta/2.\]
Then, for such small $\varepsilon>0$, it follows that
$V_{n,\varepsilon}-J_{\varepsilon}>\delta>\varepsilon$,
and this implies
$\beta_{\varepsilon}(V_{n,\varepsilon}-J_{\varepsilon})=0$ in $\text{supp}(\psi)$.
Consequently,
\[
\iint_{\Omega^n_T\setminus B_\rho(0,\bar x)} \big(\partial_\tau V_{n,\varepsilon}-\mathcal{L}V_{n,\varepsilon} \big) \psi\, d\tau\, dx= 0.
\]
By the weak convergence \eqref{WC_1}, taking $\varepsilon\rightarrow0^+$ yields
\[
\iint_{\Omega^n_T\setminus B_\rho(0,\bar x)} \big(\partial_\tau V_{n}-\mathcal{L}V_{n} \big) \psi\, d\tau\, dx= 0.
\]
Since $\psi\in C_c^\infty (\{V_n>J\}\cap\Omega^{n}_T)$ is arbitrary,
we deduce that
$\partial_\tau V_{n}-\mathcal{L}V_{n}=0$ a.e. in 
$\{V_n>J\}\cap\Omega^{n}_T.$ 

To prove
$\partial_\tau V_n-\mathcal{L}V_n\ge 0$ a.e. in $\Omega^{n}_T$,
let us fix a non-negative $\psi\in C_c^\infty(\Omega^{n}_T)$.
Then since $\beta_\varepsilon(\cdot)\le 0$, $V_{n,\varepsilon}$ satisfies that for all small $\varepsilon>0$,
\begin{align*}
    \iint_{\Omega^{n}_T} ({\partial_\tau V_{n,\varepsilon}-\mathcal{L}V_{n,\varepsilon}})\psi \, d\tau \, dx 
    =\iint_{\Omega^{n}_T}-\beta_\varepsilon(V_{n,\varepsilon}-J_{\varepsilon})\psi \, d\tau \, dx \geq0.
\end{align*}
By the weak convergence \eqref{WC_1}, taking $\varepsilon\rightarrow0^+$ yields
\begin{align*}
    \iint_{\Omega^{n}_T} ({\partial_\tau V_{n}-\mathcal{L}V_{n}})\psi \, d\tau \, dx
    \geq0.
\end{align*}
Since $\psi$ is an arbitrary non-negative test function in $C_c^\infty(\Omega_T^n)$, we deduce that
$\partial_\tau V_n-\mathcal{L}V_n\ge 0$ a.e. in $\Omega_T^n$.
Combining all, we conclude that $V_n$ is a solution of \eqref{AC_R}.

Finally, by the convergence \eqref{WC_2} for each $\rho>0$ and the continuity of the initial datum $V_n(0,x)=\max\{C(0,x),P(0,x)\}$, we obtain that $V_n\in C(\overline{\Omega_T^n})$.

\end{proof}

\subsection{Solution to the obstacle problem}
Finally, we prove the existence and uniqueness of the solution to the obstacle
problem \eqref{AC}.
\begin{theorem}
    There exists a unique solution $V\in 
    W^{1,2}_{p, \mathrm{loc}}(\Omega_T)\cap C(\overline{\Omega_T})$
    to \eqref{AC}, where $1<p<\infty$. 
\end{theorem}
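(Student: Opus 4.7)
The plan is to pass to the limit $n\to\infty$ in the sequence of solutions $V_n$ to the Neumann problems \eqref{AC_R} obtained in Theorem \ref{VN}, and then to handle uniqueness by a comparison argument adapted to the unbounded strip $\Omega_T$.

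First I would extract uniform-in-$n$ estimates on compact subsets. The key observation is that the upper barrier $h(\tau,x)=e^x+e^{-n}x^2+\mathcal{K}_1$ used in Lemma \ref{UBL} yields a bound that is in fact \emph{uniform} in $n$ on any fixed compact set $K\subset\overline{\Omega_T}$: if $|x|\leq M$ on $K$, then $h(\tau,x)\leq e^M+M^2+\mathcal{K}_1$ for every $n\geq 1$, while the lower bound $V_n\geq 0$ is absolute. Combined with interior parabolic $W^{1,2}_p$ and Hölder estimates \cite{LSU68,LIE96}, applied to $V_n$ which solves a linear equation with an $L^\infty$-bounded source inherited from the penalty argument in Theorem \ref{VN}, this gives uniform bounds
\[
\Vert V_n\Vert_{W^{1,2}_p(K)}+\Vert V_n\Vert_{C^{\alpha/2,\alpha}(\overline{K})}\leq \mathcal{K}(K,p)
\]
for any $K\Subset\Omega_T$ and $1<p<\infty$, valid for all $n$ large enough that $K\subset\Omega_T^n$. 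Note that for $K\Subset\Omega_T$ we have $K\subset\{\tau\geq\varepsilon_0\}$, so the singular point $(0,\bar x)$ that was excised in Theorem \ref{VN} poses no issue here.

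Next, a diagonal extraction along an exhaustion $\Omega_T=\bigcup_j K_j$ produces a subsequence $V_{n_k}$ and a function $V\in C(\overline{\Omega_T})\cap W^{1,2}_{p,\loc}(\Omega_T)$ such that $V_{n_k}\to V$ locally uniformly on $\overline{\Omega_T}$ and $V_{n_k}\rightharpoonup V$ weakly in $W^{1,2}_p(K)$ on every $K\Subset\Omega_T$. The verification that $V$ solves \eqref{AC} then mirrors the final step of Theorem \ref{VN}: locally uniform convergence preserves $V\geq J$ and the initial data $V(0,\cdot)=J(0,\cdot)$; for non-negative $\psi\in C_c^\infty(\Omega_T)$, weak convergence transfers the supersolution inequality
\[
\iint_{\Omega_T}(\partial_\tau V-\mathcal{L}V)\psi\,d\tau\,dx=\lim_{k\to\infty}\iint_{\Omega_T^{n_k}}(\partial_\tau V_{n_k}-\mathcal{L}V_{n_k})\psi\,d\tau\,dx\geq 0;
\]
and for $\psi$ supported in $\{V>J\}\cap\Omega_T$, uniform convergence forces $V_{n_k}>J$ on $\supp(\psi)$ for large $k$, so the equation holds in the limit there.

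The principal obstacle is uniqueness, since the comparison used in Theorem \ref{VN} relied on the boundedness of $\Omega_T^n$. Every solution produced by the scheme inherits the growth bound $V\leq e^x+\mathcal{K}_1$ by passing $h$ to the limit, and $V\geq 0$ from $V\geq J$. Given two such solutions $V^1,V^2$ with identical initial data, set $\mathcal{N}:=\{V^1>V^2\}$. On $\mathcal{N}$ we have $V^1>J$, hence $\partial_\tau V^1-\mathcal{L}V^1=0$, while $V^2$ remains a supersolution; thus $W:=V^1-V^2$ satisfies $\partial_\tau W-\mathcal{L}W\leq 0$ in $\mathcal{N}$ with $W=0$ on $\partial_p\mathcal{N}$. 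Because $\mathcal{L}$ has constant coefficients in $(\tau,x)$ after the logarithmic change of variables and $W$ grows at most exponentially in $|x|$, a Phragmén–Lindelöf-type maximum principle for parabolic operators on unbounded domains (see, e.g., \cite{FRI64}) forces $W\leq 0$, so $\mathcal{N}=\emptyset$. The symmetric argument on $\{V^2>V^1\}$ completes the proof that $V^1\equiv V^2$.
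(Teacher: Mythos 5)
Your existence argument follows the same route as the paper (a diagonal limit of the $V_n$ from Theorem \ref{VN}), but two steps need repair. First, the uniform-in-$n$ interior estimates: you take the $L^\infty$ bound on the source $\partial_\tau V_n-\mathcal{L}V_n$ ``from the penalty argument'', but the only bound that argument provides is $|\beta_\varepsilon(V_{n,\varepsilon}-J_{\varepsilon})|\le \mathcal{K}_0$ with $\mathcal{K}_0=2\{(q+r)e^n+2rK_c+5r\}$, which blows up with $n$. The paper instead identifies the source explicitly: $f_n$ vanishes off the contact set and equals $qe^x-rK_c$ or $rK_p-qe^x$ there, so $|f_n|\le qe^{M}+rK_c$ on $\{|x|\le M\}$ uniformly in $n$. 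Without this (or an equivalent observation) your $W^{1,2}_p(K)$ bounds are not uniform in $n$. Second, the theorem asserts $V\in C(\overline{\Omega_T})$ with $V(0,\cdot)=J(0,\cdot)$, but your estimates are purely interior (on $K\Subset\Omega_T$, hence $\tau\ge\varepsilon_0$), so the locally uniform convergence you extract lives on the open strip only; the claim that it ``preserves the initial data'' is unjustified because you never control the modulus of continuity of $V_n$ down to $\tau=0$. You need a H\"older estimate up to the initial boundary, uniform in $n$ (the analogue of the bound $\mathcal{K}''$ in Theorem \ref{VN}); it is available because the initial data and the $L^\infty$ bounds are uniformly controlled on $\Omega^{2R}_T$ for $n>2R$, even across the corner point $(0,\bar x)$, where only the $W^{1,2}_p$ estimate degenerates.

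For uniqueness you take a genuinely different route: the paper imports the growth bound $|V|\le M_2e^{|x|}$ and the uniqueness statement from \cite{GL05}, whereas you run the contradiction set $\mathcal{N}=\{V^1>V^2\}$ directly and close it with a Phragm\'en--Lindel\"of maximum principle for $\partial_\tau-\mathcal{L}$ on the unbounded strip. This is sound and more self-contained: $W=V^1-V^2$ is a strong subsolution on $\mathcal{N}$ vanishing on $\mathcal{P}\mathcal{N}$, the zeroth-order coefficient of $\partial_\tau-\mathcal{L}$ is $+r>0$, and exponential growth is well within the Phragm\'en--Lindel\"of class. The caveat is scope: your argument yields uniqueness only among solutions obeying the exponential growth bound, and you verify that bound only for solutions produced by your limiting scheme, while the theorem quantifies over all $W^{1,2}_{p,\mathrm{loc}}(\Omega_T)\cap C(\overline{\Omega_T})$ solutions. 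You should either state the growth condition as part of the uniqueness class or argue, as the paper does via the cited result, that every solution of the obstacle problem inherits the exponential bound from the obstacle.
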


\begin{proof}
For each $n\in\mathbb{N}$, 
let $V_n$ be the solution to the obstacle problem \eqref{AC_R}
constructed in the previous subsection.
By the $W^{1,2}_p$-regularity of $V_n$, we have
\begin{equation*}
    \begin{cases}
    \partial_\tau V_n-\mathcal{L}V_n=f_n
    \quad \text{a.e. in }\ \Omega_T^n,\\
    \partial_xV_n(\tau,-n)=-e^{-n} \quad \text{and} \quad \partial_xV_n(\tau,n)=e^n
    \quad  \text{for }\ \tau \in [0,T],\\
    V_n(0,x)=\max\{ C(0,x),P(0,x)\} \quad \text{for }\ x\in(-n,n),
    \end{cases}
\end{equation*}
where 
\[f_n(\tau,x)=
(qe^x-rK_c) \mathbf{1}_{\{V_n=e^x-K_c \}\cap \{C>P \}}+(-qe^x+rK_p) \mathbf{1}_{\{V_n=K_p-e^x \}\cap \{ P>C\}}.\]
Let us fix some $m\in\mathbb{N}$. Then for each $n>2m$,
$f_n$ is uniformly bounded in $\Omega_T^m$,
with a bound depending only on $m$ and independent of $n$.
Then it follows from the interior $W^{1,2}_p$-estimate for $1<p<\infty$ in \cite[p.355]{LSU68} that
for each small $\rho>0,$
\begin{align*}
\| V_n \|_{W^{1,2}_p(\Omega_T^m\setminus B_\rho(0,\bar{x}))}
&\le
\mathcal{K}\Big(
\| V_n \|_{L^\infty(\Omega_T^{2m})}
+\| C(0,\cdot) \|_{W^2_p((-2m,2m))}  \\
&\quad
+\| P(0,\cdot) \|_{W^2_p((-2m,2m))}
+\| f_n \|_{L^\infty(\Omega_T^{2m})}
\Big) \\
&\le \mathcal{K}',
\end{align*}
for some constant $\mathcal{K}'$ independent of $n$.
Letting $n\to\infty$, we deduce that, up to a subsequence,
there exists a function $V^m$ such that
\[
V_n \rightharpoonup V^m
\quad \text{in }
W^{1,2}_p(\Omega_T^m\setminus B_\rho(0,\bar{x}))
\]
for all $1<p<\infty$ and $\rho>0$.
In addition, the Sobolev embedding theorem
\cite[Chapter II. Lemma 3.3]{LSU68} yields
\begin{align*}
    V_{n_k} \rightarrow V^m\ \text{ in } \ 
    C(\overline{\Omega^m_T\setminus B_\rho(0,\bar{x})}).
\end{align*}
Combined with the continuity of $J(0,x)=\max\{C(0,x),P(0,x)\}$, it follows that $V^m\in C(\overline{\Omega_T^m})$ for all $m\in\mathbb{N}$.
By a standard diagonal argument, 
we can find a subsequence $\{V_{n_k}\}_{k=1}^{\infty}$
such that, as $k\rightarrow\infty$,
\begin{align}\label{WC_3}
    V_{n_k}\rightharpoonup V^m\ \ &\text{in }\ 
    W^{1,2}_p(\Omega^m_T\setminus B_\rho(0,\bar{x}))
    \quad \text{for\ all }\ 1<p<\infty 
    \ \text{ and }\ \rho>0
\end{align}
for each large $m\in\mathbb{N}.$
Thus,  $V^{m+1}=V^m$ in $\Omega^m_T$
so we can define
$V:=V^m$ in $\overline{\Omega_T^m}$ for all large $m\in \mathbb N$.

We now proceed to prove that $V$ is a solution to \eqref{AC}. 
To this end, let us fix $m \in \mathbb{N}$ and show that $V^m$ satisfies
\begin{equation*}
    \begin{cases}
    \partial_\tau V^m - \mathcal{L}V^m \geq 0 & \text{for } (\tau,x) \in \Omega^m_T \text{ with } V^m(\tau,x) = J(\tau,x), \\
    \partial_\tau V^m - \mathcal{L}V^m = 0    & \text{for } (\tau,x) \in \Omega^m_T \text{ with } V^m(\tau,x) > J(\tau,x), \\
    V^m(0,x) = J(0,x)                         & \text{for } x \in (-m,m).
    \end{cases}
\end{equation*}
Recall that for each $n \ge m$, $\partial_\tau V_{n} - \mathcal{L} V_{n} \geq 0$ a.e. in $\Omega_T^m$. 
Since $V_n \rightharpoonup V^m$ weakly in $W^{1,2}_p(\Omega_T^m \setminus B_\rho(0, \bar{x}))$ for all $\rho > 0$, we observe that for any non-negative test function $\psi \in C_c^\infty(\Omega_T^m)$,
\begin{equation*}
    \iint_{\Omega_T^m} (\partial_\tau V^m - \mathcal{L} V^m) \psi \, d\tau dx = \lim_{n \to \infty} \iint_{\Omega_T^m} (\partial_\tau V_n - \mathcal{L} V_n) \psi \, d\tau dx \ge 0.
\end{equation*}
Since $\psi \ge 0$ is arbitrary, it follows that $\partial_\tau V^m - \mathcal{L} V^m \ge 0$ a.e. in $\Omega_T^m$. 
Furthermore, the Sobolev embedding theorem implies that $V_n \to V^m$ in $C(\Omega_T^m)$ as $n \to \infty$. 
Since $V_n \ge J$ in $\Omega_T^m$, passing to the limit yields
\[ V^m \geq J \quad \text{in } \Omega^m_T. \]
Consequently, it remains to show that $\partial_\tau V^m - \mathcal{L} V^m = 0$ on the set $\{V^m > J\}$.

Fix any $\psi\in C_c^\infty(\{V^m>J\}\cap\Omega^m_T)$
and $\rho>0$ such that $\text{supp}(\psi) \subset \{ V^m>J\}\cap\Omega_T^m\setminus B_{\rho}(0,\bar{x})$.
Then, there exists $\delta>0$ such that
\[\min\limits_{\text{supp}(\psi)}(V^m-J)>\delta.\]
Let us fix $n \in\mathbb{N}$ and
define $S_n^{(1)}$ and $S_n^{(2)}$ by
\begin{align*}
   S_n^{(1)}:=\{(\tau,x)\in\Omega^m_T:\vert (V_{n}-J)-(V^m-J) \vert
   <\delta/2\},
   \\
   S_n^{(2)}:=\{(\tau,x)\in\Omega^m_T:\vert (V_{n}-J)-(V^m-J) \vert 
   \geq \delta/2\}.
\end{align*}
Note that $V_{n}-J>\frac{\delta}{2}$ in
$S_n^{(1)}$, and this implies
\[S_n^{(1)}\subset 
\{(\tau,x)\in\Omega_T^n:V_{n}(\tau,x)>J(\tau,x)\}
\cap\Omega^m_T.\]
Next, recall that $\partial_\tau V_{n}-\mathcal{L}V_{n}=0$ holds
almost everywhere in $\{(\tau,x)\in\Omega_T^n:V_{n}(\tau,x)>J(\tau,x)\}$.
This leads to the conclusion that
\begin{align*}
   \iint_{S_n^{(1)}} ({\partial_\tau V_{n}-\mathcal{L}V_{n}})\psi \, d\tau dx
   =0.
\end{align*}
Furthermore, the uniform convergence of $V_{n}-J$ to $V^m-J$ ensures that $V_{n}-J$ converges in measure as $n \to \infty$. 
In other words, the measure of the set $S_n^{(2)}$ tends to zero as $n \to \infty$. 
Together with the uniform estimate $\| V_{n_k} \|_{W^{2,1}_p(\Omega^m_T \setminus B_{\rho}(0,\bar{x}))} \leq \mathcal{K}'$, we deduce that
\begin{equation*}
    \lim_{k \to \infty} \iint_{S_{n_k}^{(2)}} \left( \partial_\tau V_{n_k} - \mathcal{L}V_{n_k} \right) \psi \, d\tau dx = 0.
\end{equation*}
Combining the above two identities, we obtain
\begin{align*}
   \iint_{\Omega^m_T} ({\partial_\tau V^m-\mathcal{L}V^m})\psi\, d\tau dx
   &=\displaystyle{\lim_{k \to \infty}}
   \iint_{S_{n_k}^{(1)}\cup S_{n_k}^{(2)}} ({\partial_\tau V_{n_k}-\mathcal{L}V_{n_k}})\psi \, d\tau dx
   =0
\end{align*}
and thus, $\partial_\tau V^m -\mathcal{L}V^m=0$ a.e. in $\{V^m>J\}$. 
Since $m\in\mathbb{N}$ is arbitrary, we conclude that $V$ is a solution to \eqref{AC}.

For the uniqueness of a solution $V$ to \eqref{AC},
we notice that by \cite[Remark 3]{GL05},
since $\vert e^x-K\vert<\max\{K,1\}e^{\vert x\vert}$
for $K>0$ and $x\in\mathbb{R}$,
$\vert\max \{C,P\}\vert\leq M_1 e^{\vert x\vert}$
for some $M_1>0$ and hence
$\vert V\vert\leq M_2 e^{\vert x\vert}$
for some $M_2>0$.
Therefore, by \cite[Theorem 3]{GL05} we obtain the uniqueness of $V$.
\end{proof}


\subsection{Estimate for the derivatives of \textit{V}}
\begin{lemma}\label{DV}
Let $V$ be a solution to \eqref{AC}. Then
\[
\partial_\tau V \ge 0 \quad \text{a.e.\ in }\ \Omega_T
\quad\text{and}\quad
-e^x \le \partial_x V \le e^x
\quad \text{in }\ \Omega_T.
\]
\end{lemma}

\begin{proof}
To prove the first inequality, fix some small $\delta>0$ and consider $V_{n,\varepsilon}(\tau+\delta,x)$ 
for all $(\tau,x)\in\Omega^n_{T-\delta}.$
Observe that $V_{n,\varepsilon}(\tau+\delta,x)$ satisfies
\begin{equation}\label{Vt}
    \begin{cases}
    \partial_{\tau} V_{n,\varepsilon}(\tau+\delta,x)-\mathcal{L} V_{n,\varepsilon}(\tau+\delta,x)
    \\ \quad\quad+\beta_\varepsilon(V_{n,\varepsilon}(\tau+\delta,x)-J_{\varepsilon}(\tau+\delta,x))=0 \quad
    \text{for }\ (\tau,x) \in \Omega^{n}_{T-\delta}, \\
    \partial_xV_{n,\varepsilon}(\tau+\delta,-n)=-e^{-n} \quad 
    \text{and} \quad
    \partial_x V_{n,\varepsilon}(\tau+\delta,n)=e^{n} \quad
    \text{for }\ \tau \in [0,T-\delta],
\end{cases}
\end{equation}
Subtracting (\ref{AC_P}) from (\ref{Vt}) and the mean value theorem yield
\begin{equation*}
    \begin{cases}
    \partial_{\tau}\{V_{n,\varepsilon}(\tau+\delta,x)-V_{n,\varepsilon}(\tau,x)\}
    -\mathcal{L} \{V_{n,\varepsilon}(\tau+\delta,x)-V_{n,\varepsilon}(\tau,x)\}\\
    \ \ +\beta'_\varepsilon(\gamma_{\tau,x})\{V_{n,\varepsilon}(\tau+\delta,x)-V_{n,\varepsilon}(\tau,x)\}
    =\beta'_\varepsilon(\gamma_{\tau,x})\{J_{\varepsilon}(\tau+\delta,x)-J_{\varepsilon}(\tau,x)\}
    \ \
    \text{for }  (\tau,x) \in \Omega^{n}_{T-\delta}, \\
    \partial_x\{V_{n,\varepsilon}(\tau+\delta,-n)-V_{n,\varepsilon}(\tau,-n)\}=0 \quad \text{and} \\
    \qquad\qquad\qquad \partial_x \{V_{n,\varepsilon}(\tau+\delta,n)-V_{n,\varepsilon}(\tau,n)\}=0
    \quad \text{for }\ \tau \in [0,T-\delta],
    \\
    V_{n,\varepsilon}(\delta,x)-V_{n,\varepsilon}(0,x)
    =V_{n,\varepsilon}(\delta,x)-J_{\varepsilon}(0,x) \quad \text{for }\ x\in (-n,n),
\end{cases}
\end{equation*}
where $\gamma_{\tau,x} > 0$ is a positive number in between   $V_{n,\varepsilon}(\tau+\delta,x)-J_{\varepsilon}(\tau+\delta,x)\ge 0$ and $V_{n,\varepsilon}(\tau,x)-J_{\varepsilon}(\tau,x)\ge0$, determined by the mean value theorem. In the case that the latter two values are equal, we choose $\gamma_{\tau,x}$ to be large so that $\beta'_\varepsilon(\gamma_{\tau,x})=0$ (see \eqref{beta}).
 Then from definition \eqref{beta} it follows  that $\beta'_\varepsilon (\gamma_{\tau,x})$ is nonnegative and bounded for $(\tau,x) \in \Omega^{n}_{T-\delta}$. In addition, by \eqref{Jepsilon}, \eqref{phiepsilon} and  Lemma~\ref{lem:inequalityCP} (iii), we have
\[(J_\varepsilon)_\tau= \varphi'_{\varepsilon}(C-P) (C_\tau-P_\tau)+P_\tau\ge0.\] 
Hence, the right-hand side of the first equation,
\[
\beta'_\varepsilon(\gamma_{\tau,x})
\{J_{\varepsilon}(\tau+\delta,x)-J_{\varepsilon}(\tau,x)\},
\]
is nonnegative for all $(\tau,x)\in \Omega^n_{T-\delta}$.
 Moreover, since $V_{n,\varepsilon}(\delta,x)\geq 
 J_{\varepsilon}(\delta,x)$
 and 
 $V_{n,\varepsilon}(0,x)= 
 J_{\varepsilon}(0,x)$ for all $x\in (-n,n)$,
 we deduce
 \[V_{n,\varepsilon}(\delta,x)-V_{n,\varepsilon}(0,x)
 \geq J_{\varepsilon}(\delta,x)-J_{\varepsilon}(0,x)  
 \geq
 0
 \quad \text{for }\ x\in (-n,n).\]
 Therefore, by the comparison principle,
 \[V_{n,\varepsilon}(\tau+\delta,x)\geq V_{n,\varepsilon}(\tau,x) \quad
 \text{for all}\ \ (\tau,x)\in \Omega^n_{T-\delta}.\]
 
Take $\varepsilon\rightarrow0$ and $n\rightarrow\infty$,
the almost everywhere differentiability of $V$ with respect to $\tau$ yields the first estimate.

 Next, from the $W^{1,2}_p$-estimate and the Sobolev embedding theorem for the solution $V_{n,\varepsilon}$ of \eqref{AC_P},
we obtain that $V_{n,\varepsilon}\in C^{\frac{\alpha}{2},\alpha}(\overline{\Omega^n_T})$
for some $\alpha\in(0,1)$.
Thus, the right hand side $\beta_\varepsilon(V_{n,\varepsilon}-J_\varepsilon)$ belongs to $C^{\frac{\alpha}{2},\alpha}(\Omega^n_T)$.
By standard Schauder theory, we deduce that $V_{n,\varepsilon} \in C^{1+\frac{\alpha}{2},2+\alpha}(\Omega_T^n)$. 
This enhances the regularity of the right-hand side, and thus, a subsequent application of Schauder theory yields $V_{n,\varepsilon} \in C^{2+\frac{\alpha}{2},4+\alpha}(\Omega_T^n)$. 
Iterating this bootstrap argument, we conclude that $V_{n,\varepsilon}$ is smooth in $\Omega_T^n$. 
This smoothness justifies differentiating the equation in \eqref{AC_P} with respect to $x$. 
Consequently, we see that $\partial_x V_{n,\varepsilon}$ satisfies
\begin{equation*}
    \begin{cases}
        \partial_\tau(\partial_xV_{n,\varepsilon})-\mathcal{L}(\partial_xV_{n,\varepsilon})
        +\beta'_\varepsilon(V_{n,\varepsilon}-J_\varepsilon)(\partial_xV_{n,\varepsilon})
        =\beta'_\varepsilon(V_{n,\varepsilon}-J_\varepsilon)\partial_xJ_{\varepsilon}
        \quad \text{in }\ \Omega_T^n,
        \\
        \partial_x V_{n,\varepsilon}(\tau,-n)=-e^{-n}
        \quad \text{and} \quad 
        \partial_x V_{n,\varepsilon}(\tau,n)=e^{n}
        \quad \text{for }\ \tau\in[0,T],
        \\
        \partial_xV_{n,\varepsilon}(0,x)=\partial_x J_\varepsilon(0,x) \quad \text{for }\ x\in(-n,n).
    \end{cases}
\end{equation*}
Define a linear operator $\Tilde{\mathcal{L}}$ by
\begin{align*}
    \Tilde{\mathcal{L}}
    :=\mathcal{L}-\beta'_\varepsilon(V_{n,\varepsilon}-J_\varepsilon)
    =\frac{\sigma^2}{2}\partial_{xx}+(r-q-\frac{\sigma^2}{2})\partial_x-(r+\beta'_\varepsilon(V_{n,\varepsilon}-J_\varepsilon))
\end{align*}
Then, $\partial_xV_{n,\varepsilon}$ satisfies the equation $(\partial_\tau-\tilde{\mathcal{L}})(\partial_x V_{n,\varepsilon})=\beta'_\varepsilon(V_{n,\varepsilon}-J_\varepsilon)\partial_xJ_{\varepsilon}$ in $\Omega^n_T$.
Since  $\mathcal{L}e^x=-qe^x$, with \eqref{DJ}, we see that $\partial_x V_{n,\varepsilon}-e^x$ satisfies
\begin{equation*}
    \begin{cases}
        \partial_\tau(\partial_xV_{n,\varepsilon}-e^x)-\tilde{\mathcal{L}}(\partial_xV_{n,\varepsilon}-e^x)
        =\beta'_\varepsilon(V_{n,\varepsilon}-J_\varepsilon)(\partial_xJ_{\varepsilon}-e^x)-qe^x\leq 0
        \quad \text{in }\ \Omega_T^n,
        \\
        \partial_x V_{n,\varepsilon}(\tau,-n)-e^{-n}=-2e^{-n}
        \quad \text{and} \quad 
        \partial_x V_{n,\varepsilon}(\tau,n)-e^n=0
        \quad \text{for }\ \tau\in[0,T],
        \\
        \partial_xV_{n,\varepsilon}(0,x)-e^x=\partial_x J_\varepsilon(0,x)-e^x\leq 0 \quad \text{for }\ x\in(-n,n).
    \end{cases}
\end{equation*}
Note that the zeroth-order coefficient of $\tilde{\mathcal{L}}$ is nonnegative.
Thus, by the maximum principle, we deduce
$\partial_xV_{n,\varepsilon}\leq e^x$ in $\Omega_T^n$.
Similarly, $\partial_x V_{n,\varepsilon}+e^x$ satisfies
\begin{equation*}
    \begin{cases}
        \partial_\tau(\partial_xV_{n,\varepsilon}+e^x)-\tilde{\mathcal{L}}(\partial_xV_{n,\varepsilon}+e^x)
        =\beta'_\varepsilon(V_{n,\varepsilon}-J_\varepsilon)(\partial_xJ_{\varepsilon}+e^x)+qe^x\geq 0
        \quad \text{in }\ \Omega_T^n,
        \\
        \partial_x V_{n,\varepsilon}(\tau,-n)+e^{-n}=0
        \quad \text{and} \quad 
        \partial_x V_{n,\varepsilon}(\tau,n)+e^n=2e^n
        \quad \text{for }\ \tau\in[0,T],
        \\
        \partial_xV_{n,\varepsilon}(0,x)+e^x=\partial_x J_\varepsilon(0,x)+e^x\geq 0 \quad \text{for }\ x\in(-n,n)
    \end{cases}
\end{equation*}
and again by the maximum principle, it follows that 
$\partial_xV_{n,\varepsilon}\geq -e^x$ in $\Omega^n_T$.
Note each weak convergence \eqref{WC_1}, \eqref{WC_3} with $p>3$ and the Sobolev embedding theorem \cite[Chapter II. Lemma 3.3]{LSU68} to see that $\partial_x V_{n,\varepsilon}\to \partial_x V$ uniformly as $\varepsilon\rightarrow0^+$ and $n\rightarrow\infty$.
Thus, taking $\varepsilon\to 0^+$ and $n\to\infty$ yields the result.
 \end{proof}

\section{Analysis of the free boundary }\label{sec:4}
\subsection{Representation of the free boundary}



Let us define the exercise region $\mathcal{E}$ and the continuation region $\mathcal{C}$ for the solution $V$ of  \eqref{AC} as
    \begin{align*}
    \mathcal{E}:=& \{ (\tau,x)\in \Omega_T: V(\tau,x)=J(\tau,x) \}\quad\text{and}\quad
    \mathcal{C}:=\{ (\tau,x)\in \Omega_T: V(\tau,x)>J(\tau,x) \}.
\end{align*}
Furthermore, let us define $\mathcal{E}_P^{\rm ch}$ and $\mathcal{E}_C^{\rm ch}$ as
\begin{align*}
    \mathcal{E}_P^{\rm ch}:=&\{ (\tau,x)\in \Omega_T:V(\tau,x)=P(\tau,x) \}
    \quad\text{and}\quad
    \mathcal{E}_C^{\rm ch}:=&\{ (\tau,x)\in \Omega_T: V(\tau,x)=C(\tau,x) \}.
\end{align*}

In the following lemma, we prove that $V$ contacts the obstacle exclusively along its smooth portions; that is, $V$ coincides with either $e^x-K_c$ or $K_p-e^x$ in the exercise region $\mathcal{E}$. 
This ensures that standard arguments remain applicable within our framework.

Furthermore, we establish that $\mathcal{E}_P^{\rm ch}$ and $\mathcal{E}_C^{\rm ch}$ are mutually disjoint. 
Consequently, the free boundary $\partial\mathcal{E}$ decomposes into the disjoint union of $\partial\mathcal{E}_P^{\rm ch}$ and $\partial\mathcal{E}_C^{\rm ch}$. 
Therefore, it suffices to analyze $\partial\mathcal{E}_P^{\rm ch}$ and $\partial\mathcal{E}_C^{\rm ch}$ separately.

\begin{lemma}\label{FB1}
Let $\mathcal{E}_P^{\rm ch}$ and $\mathcal{E}_C^{\rm ch}$ be defined as above. Then,
\begin{equation}\label{FB11}
\mathcal{E}_P^{\rm ch}\subset \{P=K_p-e^x\}
\quad \text{and} \quad
\mathcal{E}_C^{\rm ch}\subset \{C=e^x-K_c\}.
\end{equation}
In particular, we have
\begin{equation}\label{FB12}
\mathcal{E}_P^{\rm ch}=\{V=K_p-e^x\}
\quad \text{and} \quad
\mathcal{E}_C^{\rm ch}=\{V=e^x-K_c\},
\end{equation}
which further implies that $\mathcal{E}_P^{\rm ch}$ and $\mathcal{E}_C^{\rm ch}$ are mutually disjoint.
\end{lemma}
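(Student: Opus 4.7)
The plan is to prove $\mathcal{E}_p^{\rm ch}\subset\{P=K_p-e^x\}$ (and its symmetric call counterpart) by contradiction via the parabolic strong maximum principle, after which the identities \eqref{FB12} and the disjointness fall out essentially for free. Assume $(\tau_0,x_0)\in\mathcal{E}_p^{\rm ch}$ and, for contradiction, suppose $P(\tau_0,x_0)>(K_p-e^{x_0})^+$, so that $(\tau_0,x_0)\in\mathcal{C}_P$. Set $w:=V-P$. Since $V\geq J\geq P$, one has $w\geq 0$ on $\Omega_T$ with $w(\tau_0,x_0)=0$; and on the open set $\mathcal{C}_P$, $P$ satisfies $\partial_\tau P-\mathcal{L}P=0$ while $V$ is a strong supersolution, so $\partial_\tau w-\mathcal{L}w\geq 0$ a.e.\ on $\mathcal{C}_P$.

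The central step is to propagate $w\equiv 0$ from $(\tau_0,x_0)$ through $\mathcal{C}_P$ all the way down to the initial line $\{\tau=0\}$ by the strong maximum principle for nonnegative parabolic supersolutions (the zeroth-order coefficient of $\partial_\tau-\mathcal{L}$ is $r>0$, so no reduction trick is needed). The geometric input is the strict monotonicity of the put free boundary recalled in Section~\ref{sec:2}: since $x_p(\tau)$ is strictly decreasing in $\tau$, for every $x>x_p(0)$ and every $\tau\in(0,\tau_0]$ one has $x>x_p(0)\geq x_p(\tau)$, so the two-segment path $(\tau_0,x_0)\to(\tau_0,x)\to(\tau,x)$ stays entirely inside $\mathcal{C}_P$ with nonincreasing $\tau$-coordinate. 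The strong maximum principle therefore forces $w\equiv 0$ on $\{(\tau,x):\,0<\tau\leq\tau_0,\ x>x_p(0)\}$, and continuity of $V$ and $P$ up to $\tau=0$ upgrades this to $V(0,x)=P(0,x)$ for every $x>x_p(0)$. Combined with $V(0,\cdot)=\max\{C(0,\cdot),P(0,\cdot)\}$ this yields $P(0,x)\geq C(0,x)$ on the entire half-line $\{x>x_p(0)\}$, which is contradicted by $P(0,x)\leq K_p+2$ (Lemma~\ref{lem:inequalityCP}(i)) together with $C(0,x)\geq(e^x-K_c)^+$ (obstacle inequality in \eqref{CO}) as soon as $x$ is taken large enough.

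The companion inclusion $\mathcal{E}_c^{\rm ch}\subset\{C=e^x-K_c\}$ follows from the symmetric argument, using the strict increase of $x_c(\tau)$ to propagate $V=C$ back into $\{x<x_c(0)\}$ at $\tau=0$; the contradiction now comes from letting $x\to-\infty$, where $C(0,x)\leq e^x+2\to 0$ by Lemma~\ref{lem:inequalityCP}(i) whereas $P(0,x)\geq(K_p-e^x)^+\to K_p$. Granting \eqref{FB11}, the identities \eqref{FB12} are immediate: on $\mathcal{E}_p^{\rm ch}$ we have $V=P=K_p-e^x$, and conversely $V(\tau,x)=K_p-e^x$ combined with $V\geq 0$ forces $K_p-e^x\geq 0$, whence $V\geq J\geq P\geq K_p-e^x=V$ pins down $V=P$; the call case is analogous. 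Finally, any point in $\mathcal{E}_p^{\rm ch}\cap\mathcal{E}_c^{\rm ch}$ would satisfy $K_p-e^x=V=e^x-K_c$, hence $V=(K_p-K_c)/2<0$, contradicting $V\geq 0$, so the two sets are disjoint. The main obstacle I anticipate is ensuring that the strong-maximum-principle propagation actually reaches the initial line on a sufficiently large $x$-range; the monotonicity of $x_p$ (resp.\ $x_c$) is precisely what makes the required path construction go through, since without it one could only conclude $w=0$ in a small neighborhood of $(\tau_0,x_0)$, which would not contradict anything at $\tau=0$.
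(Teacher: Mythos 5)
Your core mechanism is the same as the paper's: assume a point of $\mathcal{E}_p^{\rm ch}$ lies in $\{P>K_p-e^x\}$, observe that $w=V-P$ is a nonnegative supersolution of $\partial_\tau-\mathcal{L}$ vanishing there, and propagate $w\equiv 0$ through the put's continuation region by the strong maximum principle until this contradicts $V\ge C>P$ somewhere. The difference is where you harvest the contradiction. The paper propagates \emph{horizontally}: it places the cylinder $(\tau_0-\delta,\tau_0]\times(x_0-\delta,\infty)$ inside $\{P>K_p-e^x\}$, concludes $V\equiv P$ there, and contradicts Lemma~\ref{lem:inequalityCP}(iv) ($C(\tau_0,x)>P(\tau_0,x)$ for large $x$) directly at time level $\tau_0$. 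You propagate \emph{down to the initial line} using the monotonicity of $x_p$, and contradict the explicit initial datum $V(0,\cdot)=\max\{C(0,\cdot),P(0,\cdot)\}$. Both routes are legitimate; the paper's is shorter (no path construction to $\tau=0$ needed) but leans on part (iv) of the lemma, while yours replaces that input by the initial condition plus crude size bounds. Your treatment of \eqref{FB12} and of disjointness (via $V\ge 0$) matches the paper's in substance.

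One slip to fix in the call half: you write that $C(0,x)\le e^x+2\to 0$ as $x\to-\infty$, but $e^x+2\to 2$, so Lemma~\ref{lem:inequalityCP}(i) as stated only yields a contradiction with $P(0,x)\to K_p$ when $K_p>2$. The conclusion you want ($C(0,x)<P(0,x)$ for sufficiently negative $x$) is still true and easy to justify — either by the sharper standard bound $C^A(t,s)\le s$, i.e.\ $C(0,x)\le e^x$, or by combining the uniqueness of the crossing point $\bar x$ in \eqref{barx} with $C(0,x)>P(0,x)$ for large $x$ to deduce $C(0,x)<P(0,x)$ for all $x<\bar x$ — but as written the cited bound does not close the argument. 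A second, minor point: since $V$ and $P$ are only $W^{1,2}_{p,\mathrm{loc}}$, the strong maximum principle you invoke must be the version valid for strong (a.e.) supersolutions; the paper handles this by citing \cite[Corollary 2.4]{FRA04}.
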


\begin{proof}    
    Suppose that there exists a point $(\tau_0,x_0)\in \{P>K_p-e^x\}\cap\mathcal{E}_P^{\rm ch}$.
    Since $\{P>K_p-e^x\}$ is open, there exists some $\delta>0$ such that 
    \[Q:=(\tau_0-\delta,\tau_0]\times (x_0-\delta,\infty)\subset\{P>K_p-e^x\}\] (see Figure \ref{fig:main} (B)).
    Then, by the above inclusion, it follows that $\partial_\tau P-\mathcal{L}P=0$ in $Q$ and thus,
    \[\partial_\tau(V-P)-\mathcal{L}(V-P)\geq 0 \quad \text{a.e. in }\ Q.\]
    Moreover, since $(\tau_0,x_0)\in \mathcal{E}_P^{\rm ch}$,  $V-P$ attains its minimum value $0$ at the interior point $(\tau_0,x_0)\in Q$.
    Thus, by the strong maximum principle \cite[Corollary 2.4]{FRA04}, we deduce that $V \equiv P$ in $Q$.
    This is a contradiction since $C(\tau_0,x)>P(\tau_0,x)$ for all $x>x_{\tau_0}$ (see Lemma~\ref{lem:inequalityCP} (iv)) and $V\geq C$ in $\Omega_T$.
    Therefore, 
    \[\{P>K_p-e^x\}\cap\mathcal{E}_P^{\rm ch}=\emptyset.\]
    In other words, $\mathcal{E}_P^{\rm ch}$ is contained in the set $\mathcal{E}_P=\{P=K_p-e^x\}$.
    Using a similar argument with $V$ and $C$, we can also obtain $\mathcal{E}_C^{\rm ch}\subset\mathcal{E}_C$.
    
Since $V \ge P \ge K_p - e^x$ and $V \ge C \ge e^x - K_c$, we have
\[
\{V = K_p - e^x\} \subset \mathcal{E}_P^{\rm ch}
\quad \text{and} \quad
\{V = e^x - K_c\} \subset \mathcal{E}_C^{\rm ch}.
\]
Combined with \eqref{FB11}, these inclusions yield \eqref{FB12}. 
Furthermore, since $\mathcal{E}_P$ and $\mathcal{E}_C$ are disjoint (see Subsection \ref{Prop: C,P}), the inclusions $\mathcal{E}_P^{\rm ch} \subset \mathcal{E}_P$ and $\mathcal{E}_C^{\rm ch} \subset \mathcal{E}_C$ immediately imply the disjointness of $\mathcal{E}_P^{\rm ch}$ and $\mathcal{E}_C^{\rm ch}$.
\end{proof}

\begin{lemma} 
Let $\mathcal{E}_P^{\rm ch}$ and $\mathcal{E}_C^{\rm ch}$
be defined as above.
Then, $\mathcal{E}_P^{\rm ch}$ and $\mathcal{E}_C^{\rm ch}$ are nonempty.
\end{lemma}

\begin{proof}
We first observe from the
increasing property of $x_c(\tau)$
and the inequality  $x_p(\tau)< x_c(\tau)$ that $C(\tau,x)> e^x-K_c$ for all $(\tau,x)\in (0,T) \times (-\infty,x_p(0))$.
Suppose, by contradiction, that
$\mathcal{E}_P^{\rm ch} = \emptyset$.
Choose $x_0 \in \mathbb{R}$ and $\delta > 0$
such that
$x_0 + 3\delta < \min\{x_p(0),\bar x\}$,
and set
\[Q := (0,\delta)\times(x_0-2\delta,x_0-\delta).\]
Since $Q \subset \{C > e^x - K_c\}$ and the set $\{V=C\}$ is identical to $\{V=e^x-K_c\}$ by Lemma~\ref{FB1}, it follows that $Q$ is disjoint from $\{V=C\}$. 
Furthermore, under the assumption that $\mathcal{E}_P^{\rm ch} = \emptyset$, the set $Q$ is necessarily contained within the continuation region $\mathcal{C}$. 
Consequently, we obtain $\partial_\tau V - \mathcal{L}V = 0$ in $Q$.

Define the bottom boundary of $Q$ as
\[\mathcal{B}_Q := \{0\}\times(x_0-2\delta,x_0-\delta).\]
Then from the initial condition $V(0,x)=\max\{C(0,x),P(0,x)\}$ and  $x_0<\min\{x_p(0),\bar x\}$,
it follows that
$V(0,x)=K_p-e^x$ on $\mathcal{B}_Q$.
Moreover, since $V$ satisfies the equation $\partial_\tau V -\mathcal{L}V=0$ in $Q$, we apply the result in \cite[Theorem 19, p.~321]{FRI64} to $V-(K_p-e^x)$ to deduce that $V$ is smooth in $Q$ and up to the initial boundary $\mathcal{B}_Q$.
Therefore, passing to the limit as $\tau\to0^+$ in
$(\partial_\tau-\mathcal{L})(V-(K_p-e^x))=qe^x-rK_p$,
we obtain for sufficiently small $\delta>0$ that
\[
\partial_\tau V
\le rK_p (e^{-\delta}-1) < 0
\quad \text{on } \mathcal{B}_Q.
\]
This contradicts the first estimate in Lemma~\ref{DV}.
Hence,
$\mathcal{E}_P^{\rm ch} \neq \emptyset$.

The same argument yields
$\mathcal{E}_C^{\rm ch} \neq \emptyset$.
\end{proof}

To proceed further, recall from Subsection~\ref{Prop: C,P} that the $x$-coordinates of points in
$\mathcal{E}_C$ and $\mathcal{E}_P$ satisfy $x>\ln K_c$ and $x<\ln K_p$, respectively.
Combined with Lemma~\ref{FB1}, the same bounds hold for points in
$\mathcal{E}_C^{\rm ch}$ and $\mathcal{E}_P^{\rm ch}$, respectively.
Therefore, using Lemma~\ref{DV} and Lemma~\ref{FB1}, we parameterize the free boundaries by
\begin{align}\label{FB2-1}
     x_p^{\rm ch}(\tau)&:=\sup\limits_{x<\ln K_p} \ \{x: V(\tau,x)=K_p-e^x \}\quad \tau\in(0,T),\\
     x_c^{\rm ch}(\tau)&:=\inf\limits_{x>\ln K_c} \ \{x: V(\tau,x)=e^x-K_c \}\quad \tau\in(0,T).\label{FB2-2}
\end{align}
\begin{remark}
    Since $\mathcal{E}_P^{\rm ch}$ and $\mathcal{E}_C^{\rm ch}$ are nonempty, the set $\{x:V(\tau_1,x)=K_p-e^x\}$ and $\{x:V(\tau_2,x)=e^x-K_c\}$ are nonempty for some $\tau_1, \ \tau_2 \in(0,T)$.
    Moreover, the monotonicity of $V$ with respect to $\tau$, obtained in Lemma \ref{DV}, implies that $x_p^{\rm ch}(\tau)$ and
$x_c^{\rm ch}(\tau)$ are monotone decreasing and increasing respectively.
\end{remark}

\begin{figure}[ht]
    \centering
    \begin{subfigure}{0.6\textwidth}  
        \includegraphics[width=\linewidth]{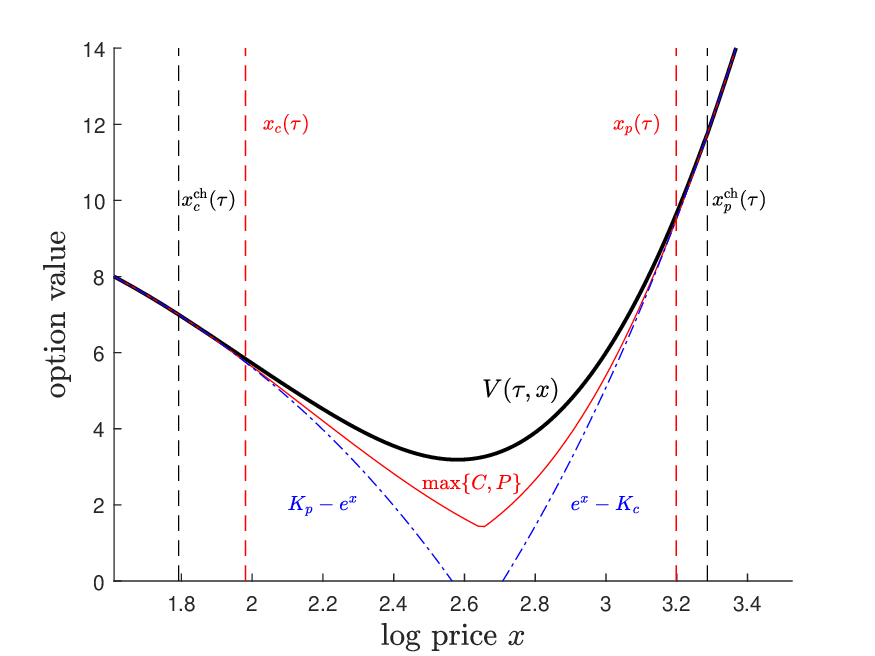}
     \end{subfigure}
    \caption{solution $V(\tau,x)$ and obstacle $\max\{C(\tau,x),P(\tau,x)\}.$}
    \label{fig:main1}
\end{figure}

Figure \ref{fig:main1} illustrates the solution $V(\tau, x)$ of \eqref{AC} and the obstacle $J(\tau,x)=\max\{C(\tau, x), P(\tau, x)\}$. 
From the figure, we observe that $V(\tau, x)$ and the obstacle meet in the regions where $C(\tau, x) = e^x - K_c$ and $P(\tau, x) = K_p - e^x$, which was proven in Lemma~\ref{FB1}. 
Therefore, this confirms that
\begin{equation}\label{inq:frees}
    x_p^{\rm ch}(\tau) \le x_p(\tau) < x_c(\tau) \le x_c^{\rm ch}(\tau), \quad\quad \forall\; \tau \in [0, T].
\end{equation}

\subsection{Properties of the free boundary}
At the end of the previous subsection, we represented each free boundary
by viewing their $x$-coordinate as the supremum and infimum of the points
where the contact between the solution and the obstacle occurs.
To see that such representations correspond to the
actual free boundary, we need to establish that the parametrizations
in \eqref{FB2-1} and \eqref{FB2-2} are continuous.

\begin{lemma} 
    Let $x_p^{\rm ch}(\tau),$ $x_c^{\rm ch}(\tau)$ be defined as in 
    (\ref{FB2-1}) and (\ref{FB2-2}) respectively.
    Then $x_p^{\rm ch}$ and $x_c^{\rm ch}$ are continuous on $(0,T)$.
\end{lemma}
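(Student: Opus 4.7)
The plan is to rule out jump discontinuities of the monotone functions $x_p^{\rm ch}$ and $x_c^{\rm ch}$ by combining the closedness of the exercise regions with a parabolic regularity argument that contradicts the inequality $\partial_\tau V \geq 0$ from Lemma \ref{DV}(1). I will describe the argument for $x_p^{\rm ch}$; the one for $x_c^{\rm ch}$ is entirely symmetric, with the put obstacle $K_p - e^x$ replaced by the call obstacle $e^x - K_c$ and the identity $\mathcal{L}(K_p - e^x) = qe^x - rK_p$ replaced by $\mathcal{L}(e^x - K_c) = rK_c - qe^x$.

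The closedness of $\mathcal{E}_p^{\rm ch}$ (a consequence of the continuity of $V$) together with the monotonicity recorded in the preceding remark yields one of the two one-sided continuities of $x_p^{\rm ch}$ at each $\tau_0 \in (0, T)$ for free. The substantive content is the opposite one-sided limit. Arguing by contradiction, suppose there is a jump of size $a - b > 0$ at some $\tau_0 \in (0, T)$ between the two one-sided limits of $x_p^{\rm ch}$. Monotonicity and closedness then force the closed segment $\{\tau_0\} \times [b, a]$ into $\mathcal{E}_p^{\rm ch}$, so that $V(\tau_0, x) = K_p - e^x$ on this segment. On the opposite side of $\tau_0$, an open rectangle $R$ of the form $(\tau_0, \tau_0 + \varepsilon) \times (b, a)$ (or its left-handed analogue) sits entirely in the continuation region $\mathcal{C}$, so $V$ satisfies the linear parabolic equation $\partial_\tau V - \mathcal{L} V = 0$ in $R$ classically.

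Applying parabolic Schauder theory to $V$ on $R$ with the smooth bottom trace $V(\tau_0, x) = K_p - e^x$ yields $C^{1,2}$-regularity of $V$ up to $\{\tau_0\} \times [b', a']$ for every $[b', a'] \Subset (b, a)$. Consequently, for each $x_0 \in (b, a)$, the one-sided limit of $\partial_\tau V(\cdot, x_0)$ at $\tau_0$ along sequences in $R$ exists and equals
\[
\mathcal{L}(K_p - e^{x_0}) \;=\; qe^{x_0} - rK_p.
\]
By the inequality $x_p^{\rm ch}(\tau) \leq x_p(\tau)$ in \eqref{inq:frees}, together with the strict monotonicity and $\tau \to 0^+$ limit of $x_p$ recorded in Subsection \ref{Prop: C,P}, one checks that $a \leq x_p(\tau_0) < \ln(rK_p/q)$, so the right-hand side above is strictly negative on $(b, a)$. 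On the other hand, $\partial_\tau V$ is continuous in $\mathcal{C}$ and satisfies $\partial_\tau V \geq 0$ there by Lemma \ref{DV}(1), so the one-sided limit above must be nonnegative, a contradiction. Hence $x_p^{\rm ch}$ admits no jump at $\tau_0$.

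The main obstacle I anticipate is the Schauder regularity up to the flat bottom $\{\tau_0\} \times (b, a)$ of $R$. A priori, $V$ lies only in $W^{1,2}_{p, \mathrm{loc}}(\Omega_T)$ and enjoys no smoothness across $\tau = \tau_0$; restricted to $R$, however, $V$ is a classical solution of a linear parabolic equation with $C^\infty$ bottom data and continuous lateral data, and so standard parabolic Schauder estimates on sub-rectangles compactly contained in $(b, a)$ in the $x$-variable deliver the regularity that legitimises passing to the pointwise one-sided limit of $\partial_\tau V$ at $\tau_0$ from within $R$.
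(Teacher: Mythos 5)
Your argument is correct and coincides with the paper's own proof: both rule out a jump of the monotone free boundary by placing a small rectangle of the continuation region against the flat segment $\{\tau_0\}\times[b,a]$ where $V$ equals the put (resp.\ call) payoff, invoking boundary Schauder regularity up to that segment to conclude $\partial_\tau V = qe^x-rK_p<0$ (resp.\ $rK_c-qe^x<0$) there, contradicting Lemma~\ref{DV}(1). Your observation that closedness of $\mathcal{E}_p^{\rm ch}$ makes one one-sided limit automatic is a minor streamlining of the paper's treatment of the two discontinuity cases, but the method is the same.
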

\begin{proof}
    Suppose that $x_p^{\rm ch}$ is not continuous at some point $\tau_0\in (0,T).$ Then by the monotonicity of $x_p^{\rm ch}(\tau),$ either 
$\displaystyle{\lim_{\tau \to {\tau_0}^+}x_p^{\rm ch}(\tau)}> {x_p^{\rm ch}}(\tau_0)$ or $\displaystyle{\lim_{\tau \to {\tau_0}^-}x_p^{\rm ch}(\tau)}< {x_p^{\rm ch}}(\tau_0)$.
Let us consider the first case.
Then, there exist some $\varepsilon>0$ and $\delta>0$ such that
for all $\tau$ in the interval $(\tau_0,\tau_0+\delta)$ we have
\[x_p^{\rm ch}(\tau) - x_p^{\rm ch}(\tau_0) \ge \varepsilon.\]
Consider the cylinder
\[Q := (\tau_0,\tau_0+\delta)
\times
\big(x_p^{\rm ch}(\tau_0) - \tfrac{\varepsilon}{2},
      x_p^{\rm ch}(\tau_0)\big).\]
By construction, it can be seen that $Q$ is contained in the set $\{V > K_p - e^x\}$ and disjoint from $\{V = e^x - K_c\}$.
Hence, $\partial_\tau V - \mathcal{L}V = 0$ in $Q$.
Moreover, since $V= K_p-e^x$ on the bottom boundary 
$\mathcal{B}_Q:=\{\tau_0\}\times(x_p^{\rm ch}(\tau_0)-\frac{\varepsilon}{2},x_p^{\rm ch}(\tau_0))$ of $Q$,
we apply the result in \cite[Theorem 19, p.~321]{FRI64} to $V-(K_p-e^x)$ to deduce that
$V$ is smooth in $Q$ up to the initial boundary $\mathcal{B}_Q$.
Next, recall from the monotonicity of $x_p^{\rm ch}$ that $x_p^{\rm ch}(\tau_0)<x_p^{\rm ch}(0)\le \ln{(\frac{r}{q}K_p)}$ and this implies
\[ (\partial_\tau-\mathcal{L}) \{V-(K_p-e^x)\}=qe^x-rK_p\leq qe^{x_p^{\rm ch}(\tau_0)}-rK_p<0
\quad \text{in} \ \ Q.\]
Using the previous regularity result, letting $\tau\to \tau_0^+$ yields
\[\partial_\tau V \leq qe^{x_p^{\rm ch}(\tau_0)}-rK_p<0 
\quad \text{on }\ \mathcal{B}_{Q}.\]
This contradicts the first estimate in Lemma \ref{DV}. 

The same argument applies to the other discontinuity case
\[
\lim_{\tau \to \tau_0^-} x_p^{\rm ch}(\tau)
< x_p^{\rm ch}(\tau_0),
\]
which also leads to a contradiction. Therefore, $x_p^{\rm ch}$ is continuous on $(0,T)$. 
The continuity of $x_c^{\rm ch}$ follows analogously.
\end{proof}

In the following theorems, we will explore additional important properties of $x_p^{\rm ch}(\tau)$ and $x_c^{\rm ch}(\tau)$.

\begin{theorem}\label{thm:free-p}
Let $x_p^{\rm ch}(\tau)$ be defined as in \eqref{FB2-1}. Then,
\begin{itemize}
\item[i)] $x_p^{\rm ch}$ is strictly decreasing on $(0,T)$.
\item[ii)] $\displaystyle{\lim_{\tau \to 0^+}x_p^{\rm ch}(\tau)}
=\min\{\bar{x},{x_p}(0)\}$,
where $\bar{x}$ is the point stated in \eqref{barx}.
\end{itemize}
\end{theorem}

\begin{proof}[Proof of i)]
From Lemma \ref{DV}, we can easily verify that $x_p^{\rm ch}(\tau)$ is monotone decreasing. 
Assume, for contradiction, that $x_p^{\rm ch}(\tau)$ is not strictly decreasing.
Then there exists some $\tau_1,\tau_2>0$ with $\tau_1<\tau_2$ such that $x_p^{\rm ch}(\tau_1)=x_p^{\rm ch}(\tau_2)=:x_0.$
Moreover, from that $x_c^{\rm ch}(0)\ge x_c(0) >\ln K_c$ 
(see subsection~\ref{Prop: C,P} and \eqref{inq:frees}), we see the cylinder $Q:=(\tau_1,\tau_2-\delta)\times (x_0,\ln{K_c})$ is contained in the continuation region $\mathcal{C}$.
This implies $\partial_\tau V-\mathcal{L}V=0$ in $Q$.
For each sufficiently small $\delta > 0$, we define by
\[V_\delta(\tau,x)
:= V(\tau+\delta,x) - V(\tau,x)
\quad \text{for all }\
(\tau,x)\in \Omega_{T-\delta}.\]
Then, $V_\delta$ satisfies
$\partial_\tau V_\delta - \mathcal{L} V_\delta = 0$ in $Q$.
Note that there exists a point $(\tau',x')\in Q$ such that $V_\delta(\tau',x')=0.$ Otherwise, we see from the parabolic Hopf lemma that
\begin{equation}\label{Vx delta}
    \partial_x V_\delta(\tau,x_0)>0 
    \quad \text{for \ all}\ \ \tau\in(\tau_1,\tau_2-\delta).  
\end{equation}
But $V(\tau,x_0)=K_p-e^{x_0}$ and the continuity of $\partial_x V$, induced by the Sobolev embedding theorem, implies that
$\partial_x V_\delta(\tau,x_0)=0$ for all $\tau\in(\tau_1,\tau_2-\delta)$, which contradicts \eqref{Vx delta}.

Now, by the estimate in Lemma \ref{DV} and the continuity of $V$, it follows that $V_\delta(\tau,x)\geq 0$ for all $(\tau,x)\in Q$. Thus, $V_\delta$ attains its minimum as zero at $(\tau',x')\in Q$.
Furthermore, since $\partial_\tau V_\delta-\mathcal{L}V_\delta=0$ in $Q$, we deduce from the Schauder theory that $V_\delta$ is smooth in $Q$.
Applying the strong maximum principle \cite[Theorem 2.7]{LIE96} to $V_\delta$ yields
\begin{equation}\label{Vdelta}
V_\delta= 0\quad\text{in }\ \{(\tau,x)\in Q: \tau\le \tau'\}.    
\end{equation}
Define the cylinder $Q' := (0, \tau') \times (\ln K_p, \ln K_c)$. 
In view of \eqref{FB2-1}, \eqref{FB2-2}, and the monotonicity of each free boundaries obtained by the estimates in Lemma \ref{DV}, we find that $Q' \subset \mathcal{C} \cap \Omega_{T-\delta}$, and this implies $\partial_\tau V_\delta -\mathcal{L}V_\delta=0$ in $Q'$.
Moreover, since \eqref{Vdelta} ensures that $V_\delta \equiv 0$ in the non-empty set $Q \cap Q'$, applying the strong maximum principle again yields $V_\delta \equiv 0$ in $Q'$. 
Since $\delta > 0$ is arbitrary, we conclude that $V$ is constant with respect to $\tau$ in $Q'$.
Considering the initial condition $V(0,x) = \max\{C(0,x), P(0,x)\}$ and the regularity $V \in C(\overline{\Omega_T})$, it follows that
\begin{equation}\label{Q'}
    V(\tau,x) = \max\{C(0,x), P(0,x)\} \quad \text{for all } (\tau,x) \in Q'.
\end{equation}
Since $C$ and $P$ are non-decreasing with respect to $\tau$ by Lemma \ref{lem:inequalityCP} (iii), we have
\[ 
\max\{C(0,x), P(0,x)\} \le \max\{C(\tau,x), P(\tau,x)\} \quad \text{for all } (\tau,x) \in Q'. 
\]
However, the inclusion $Q' \subset \mathcal{C}$ yields
\[ 
V(\tau,x) > \max\{C(\tau,x), P(\tau,x)\} \ge \max\{C(0,x), P(0,x)\} \quad \text{for all } (\tau,x) \in Q', 
\]
and this contradicts \eqref{Q'} and completes the proof.
\end{proof}

\begin{proof}[Proof of ii)]
We define $x_p^{\rm ch}(0)$ by
\[x_p^{\rm ch}(0) := \sup\{x \in \mathbb{R} : V(0,x) = K_p - e^x \}.\]
Since $V(0,x) = \max\{C(0,x), P(0,x)\}$ for all $x \in \mathbb{R}$, it is straightforward to verify that
\[x_p^{\rm ch}(0) = \min\{\bar{x}, x_p(0)\}.\]
We first observe that $\displaystyle\lim_{\tau \to 0^+} x_p^{\rm ch}(\tau)$ exists, since $x_p^{\rm ch}(\tau)$ is monotonically decreasing and bounded from above.

Suppose $\displaystyle{\lim_{\tau \to 0^+}x_p^{\rm ch}(\tau)}
> x_p^{\rm ch}(0)$.
Then there exists some $\tau_p>0$ such that
$x_p^{\rm ch}(\tau_p)>x_p^{\rm ch}(0)$.
Hence,
\[
K_p - e^{x_p^{\rm ch}(\tau_p)}
= V\big(\tau_p, x_p^{\rm ch}(\tau_p)\big)
\ge J\big(\tau_p, x_p^{\rm ch}(\tau_p)\big)
\ge J\big(0, x_p^{\rm ch}(\tau_p)\big)
> K_p - e^{x_p^{\rm ch}(\tau_p)},
\]
where $J=\max\{C,P\}$ and the second inequality follows from the result
$\partial_\tau C \geq 0$ and $\partial_\tau P \geq 0$ in $\Omega_T$
(see Lemma \ref{lem:inequalityCP} iii)).
Therefore, a contradiction occurs.

On the other hand, if $\displaystyle{\lim_{\tau \to 0^+}x_p^{\rm ch}(\tau)}$ 
is strictly less than $x_p^{\rm ch}(0)$,
there exist some $\varepsilon>0$ and $\delta>0$ such that
for all $\tau\in(0,\delta)$,
\[x^{\rm ch}_p(0)-x^{\rm ch}_p(\tau)\ge \varepsilon.
\]
Now, consider a cylinder $Q:=(0,\delta)\times(x^{\rm ch}_p(0)-\frac{\varepsilon}{2}, x^{\rm ch}_p(0))$.
By construction, we see that $Q$ is contained in the continuation region $\mathcal{C}$ and thus, $\partial_\tau V -\mathcal{L}V=0$ in $Q$.
Moreover, since $x^{\rm ch}_p(0) < \ln({\frac{r}{q}}K_p)$,
it follows that
\[(\partial_\tau-\mathcal{L})(V-(K_p-e^x))
\le qe^{x^{\rm ch}_p(0)}-rK_p
< 0 \quad \text{in}\ \ Q.
\]
Note that since $V=K_p-e^x$ on the line segment
$\mathcal{B}_Q:=\{0\}\times(x^{\rm ch}_p(0)-\frac{\varepsilon}{2}, x^{\rm ch}_p(0))$,
we apply the result in \cite[Theorem 19, p.~321]{FRI64} to $V-(K_p-e^x)$ to deduce that $V$ is smooth in $Q$ and up to the initial boundary $\mathcal{B}_Q$.
Thus, letting $\tau\to0^+$ in the above inequality yields
$\partial_\tau V<0$ on $\mathcal B_Q$.
This contradicts the first estimate in Lemma \ref{DV}.
Therefore, we conclude $\displaystyle{\lim_{\tau \to 0^+}x_p^{\rm ch}(\tau)}=x_p^{\rm ch}(0)$.
\end{proof}

By applying the same argument, analogous  results for  $x_c^{\rm ch}(\tau)$ can be obtained. Hence we omit the proof.

\begin{theorem}\label{thm:free-c} 
Let  $x_c^{\rm ch}(\tau)$ be defined as in  \eqref{FB2-2}. Then,
\begin{itemize}
\item[i)]$x_c^{\rm ch}$ is strictly increasing on $(0,T)$.

\item[ii)]$\displaystyle{\lim_{\tau \to 0^+}x_c^{\rm ch}(\tau)}
=\max\{\bar{x},{x_c}(0)\}$,
where $\bar{x}$ is the point stated in \eqref{barx}.
\end{itemize}
\end{theorem}

\begin{figure}[ht]
    \centering
    \begin{subfigure}{0.6\textwidth}  
        \includegraphics[width=\linewidth]{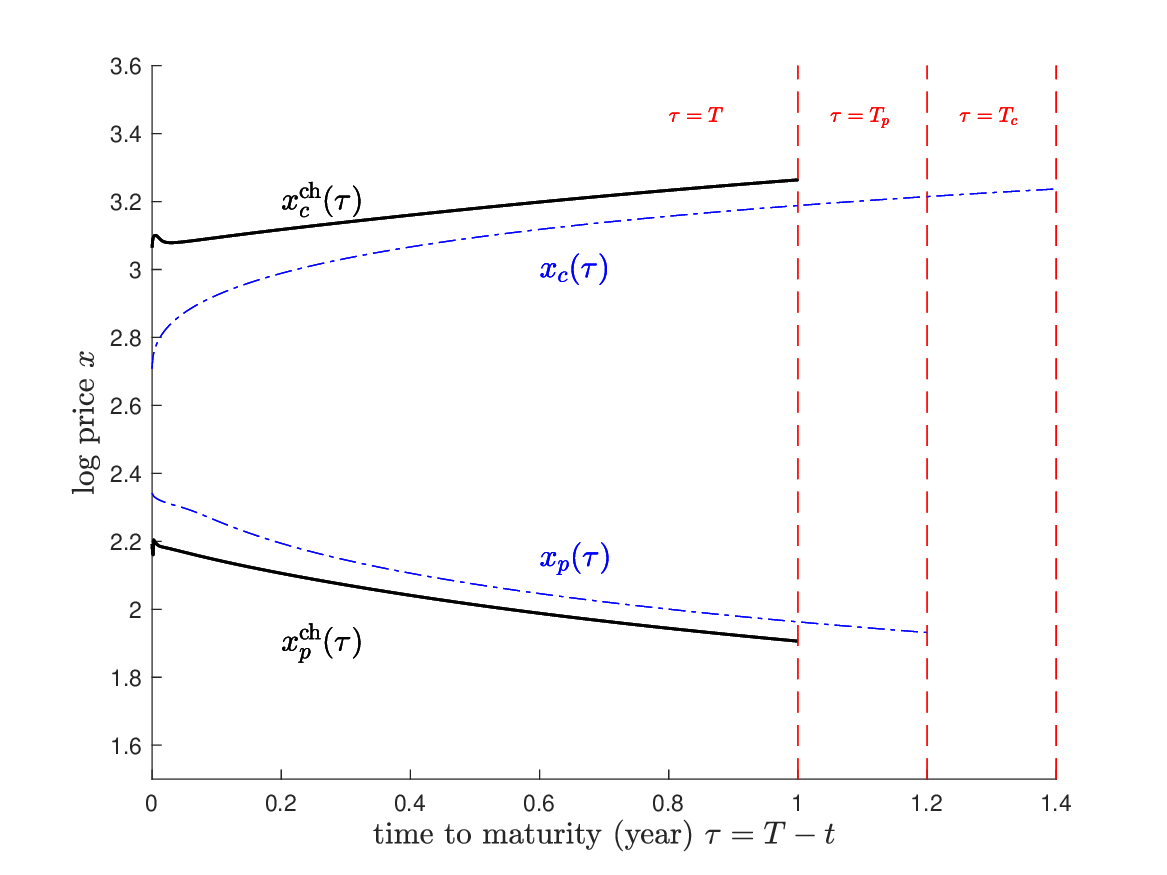}
     \end{subfigure}
    \caption{The free boundaries $x_c^{\rm ch}(\tau)$ and ${x}_p^{\rm ch}(\tau)$. }
    \label{fig:main2}
\end{figure}
Figure \ref{fig:main2} illustrates the behavior of the two free boundaries, $x_c^{\rm ch}(\tau)$ and $x_p^{\rm ch}(\tau)$, for $V(\tau, x)$. 
In this figure, we can observe that, as shown in Theorems \ref{thm:free-p} and \ref{thm:free-c}, $x_p^{\rm ch}(\tau)$ and $x_c^{\rm ch}(\tau)$ are strictly decreasing and strictly increasing in $\tau \in [0, T]$, respectively. Moreover, we can also verify the inequality \eqref{inq:frees}, which was discussed earlier.

\subsection{Regularity of the free boundary}

We conclude this section by proving that the free boundaries 
$x_p^{\rm ch}(\tau)$ and $x_c^{\rm ch}(\tau)$ are smooth.
To reach such a conclusion, we first establish the $C^1$-regularity of the free boundaries, which allows us to reformulate the obstacle problem as a Stefan-type free boundary problem
(see \cite{YANG2006}).
The proof of the $C^1$-regularity relies fundamentally on \cite[Theorem~4.6]{FRI75}.

In our framework, recall that the continuation region $\mathcal{C}$ can be represented as
\begin{equation}\label{Cont reg}
    \mathcal{C}=\{(\tau,x)\in\Omega_T: x_p^{\rm ch}(\tau)<x<x_c^{\rm ch}(\tau)\},
\end{equation}
and we have verified that the free boundaries $x_p^{\rm ch}(\tau)$ and $x_c^{\rm ch}(\tau)$ are monotonically decreasing and increasing curves, respectively. In \cite{FRI75}, conditions (3.1) and (3.2) are introduced to guarantee the monotonicity of the free boundaries and the configuration of the continuation region described as in \eqref{Cont reg}. Since our framework already exhibits these properties, we can directly apply the arguments from the proof of \cite[Theorem~4.6]{FRI75} to our problem \eqref{AC} to deduce the following proposition:
\begin{proposition}\label{C1}
Let $x_p^{\rm ch}(\tau)$ and $x_c^{\rm ch}(\tau)$ be defined as in \eqref{FB2-1} and \eqref{FB2-2}. Then, $x_p^{\rm ch}, x_c^{\rm ch} \in C^1((0,T))$. Moreover, $\partial_\tau V$ and $\partial_{\tau x}V$ are continuous up to the free boundaries $x_p^{\rm ch}(\tau)$ and $x_c^{\rm ch}(\tau)$.
\end{proposition}


Once the $C^1$-regularity of the free boundary $x_p^{\rm ch}(\tau)$ is established, we reformulate the obstacle problem as a Stefan-type problem to obtain higher regularity for $x_p^{\rm ch}(\tau)$. Specifically, since $V$ contacts with $K_p-e^x$ on the free boundary $x_p^{\rm ch}(\tau)$, we denote $W := V - (K_p - e^x)$ and $Z := \partial_\tau W$ and restrict our analysis to the region 
\[
\mathcal{C}_p := \{(\tau, x) \in \Omega_T : x_p^{\rm ch}(\tau) < x < \ln K_c\}.
\]
By Proposition \ref{C1}, the derivatives $\partial_\tau W$ and $\partial_{\tau x} W$ are continuous up to the free boundary $x_p^{\rm ch}(\tau)$. 
Using the conditions $W(\tau, x_p^{\rm ch}(\tau)) = 0$ and $\partial_x W(\tau, x_p^{\rm ch}(\tau)) = 0$, we differentiate the former with respect to $\tau$ to obtain
$$ \partial_\tau W(\tau, x_p^{\rm ch}(\tau)) + \partial_x W(\tau, x_p^{\rm ch}(\tau)) \left[\frac{d}{d\tau}x_p^{\rm ch}(\tau)\right] = 0. $$
Since $\partial_x W (\tau,x_p^{\rm ch}(\tau))= 0$, this yields $\partial_\tau W(\tau, x_p^{\rm ch}(\tau)) = 0$. 
Next, evaluating the governing equation $\partial_\tau W - \mathcal{L}W = qe^x - rK_p$ and utilizing the fact that $W = \partial_x W = \partial_\tau W = 0$ at the free boundary, we find that $\partial_{xx}W$ is also continuous up to the boundary and satisfies
$$ -\frac{\sigma^2}{2} \partial_{xx}W(\tau, x_p^{\rm ch}(\tau)) = qe^{x_p^{\rm ch}(\tau)} - rK_p. $$
Furthermore, differentiating the second condition $\partial_x W(\tau, x_p^{\rm ch}(\tau)) = 0$ with respect to $\tau$ gives
$$ \partial_{\tau x}W(\tau, x_p^{\rm ch}(\tau)) + \partial_{xx}W(\tau, x_p^{\rm ch}(\tau)) \left[\frac{d}{d\tau}x_p^{\rm ch}(\tau)\right] = 0. $$
Substituting the expression for $\partial_{xx}W$ into this equation, and recalling that $Z = \partial_\tau W$ (which implies $\partial_x Z = \partial_{\tau x}W$), we deduce the following Stefan-type condition:
\begin{equation}\label{stefan}
    \frac{d}{d\tau}x_p^{\rm ch}(\tau) = \frac{\sigma^2}{2} \frac{\partial_{\tau x} W(\tau,x_p^{\rm ch}(\tau))}{qe^{x_p^{\rm ch}(\tau)}-rK_p} = \frac{\sigma^2}{2} \frac{\partial_x Z(\tau,x_p^{\rm ch}(\tau))}{qe^{x_p^{\rm ch}(\tau)}-rK_p}, \quad \tau \in [0,T].
\end{equation}
The denominator in \eqref{stefan} is strictly negative (and thus non-zero), since $x_p^{\rm ch}(\tau) \le x_p(0) = \hat{x}_p(T_p-T) < \ln\big(\frac{r}{q}K_p\big)$ for all $\tau \in [0,T]$.

For any sufficiently small $\delta \in (0,1)$, the function $Z$ satisfies
\begin{equation*}
\begin{cases}
\partial_\tau Z - \mathcal{L}Z = 0 \quad &\text{in } \{(\tau,x) \in \mathcal{C}_p : \delta < \tau < T\}, \\
Z(\tau, x_p^{\rm ch}(\tau)) = 0 \quad \text{and} \quad Z(\tau, \ln K_c) = \partial_\tau V(\tau, \ln K_c) \quad &\text{for } \tau \in (\delta, T), \\
Z(\delta, x) = \partial_\tau V(\delta, x) \quad &\text{for } x_p^{\rm ch}(\delta) < x < \ln K_c.
\end{cases}
\end{equation*}
Here, we need to consider the positive constant $\delta$ since 
$V$ is not sufficiently regular at the point $(0,\bar{x})$.
Then, applying the standard bootstrap argument in \cite{Schaeffer1976},
to the above equation with considering \eqref{stefan}, we conclude that the free boundary $x_p^{\rm ch}(\tau)$ is $C^\infty$ on $(2\delta,T)$
for every $\delta>0$ and hence on $(0,T)$.
The same argument implies that
the free boundary $x_c^{\rm ch}(\tau)$ is also $C^\infty$
on $(0,T)$.
Therefore, we have obtained the following theorem.
\begin{theorem}
Let $x_p^{\rm ch}(\tau)$ and $x_c^{\rm ch}(\tau)$ be defined as in \eqref{FB2-1} and \eqref{FB2-2}.
Then, $x_p^{\rm ch},\, x_c^{\rm ch} \in C^\infty((0,T))$.
\end{theorem}
\begin{remark}
Lemma~\ref{FB1} is a key observation for the analysis of the free boundaries.
It shows that the solution $V$ coincides with the smooth payoff functions
$e^x-K_c$ and $K_p-e^x$ on the exercise region $\mathcal{E}$.
Consequently, near the free boundaries the obstacle behaves like a smooth function,
and the Stefan-type condition \eqref{stefan} is well-defined.

In particular, the denominator in \eqref{stefan}, which involves the difference
between the spatial derivatives of $V$ and the obstacle, is non-vanishing.
This allows us to apply the standard regularity arguments for Stefan-type
free boundary problems.

If the contact occurred with the continuation region of $C$ or $P$
instead of their payoff parts $e^x-K_c$ and $K_p-e^x$,
the obstacle would no longer be smooth and the denominator in
\eqref{stefan} might vanish.
In that case, the above argument would fail and the smoothness of the free
boundaries would not follow directly.
\end{remark}

\section*{Appendix}
\addcontentsline{toc}{section}{Appendix}

In the Appendix,
we derive the formulation of the problem \eqref{eqV*} from \eqref{V_ch} and verify that the solution of \eqref{eqV*} satisfies \eqref{V_ch}.
Furthermore, we provide the proof of Lemma \ref{lem:inequalityCP}, and show that $\mathcal F$ in the proof of Theorem~\ref{thm:existenceVnepsilon} satisfies the conditions of the Schauder fixed point theorem.

\renewcommand{\thesubsection}{\Alph{subsection}}
\setcounter{subsection}{0}

\subsection{Formulation of the model}\label{Appendix.A}
Recall that $V^{\rm ch}$ satisfies that for each $t\in(0,T)$,
\[V^{\rm ch}(t,S_t) = \sup_{\theta \in \mathcal{U}_{t,T}}\mathbb{E}\left[e^{-r(\theta-t)} \max\left\{C^A(\theta,S_{\theta}),P^A(\theta,S_{\theta})\right\}\mid \mathcal{G}_t\right],\]
where $\mathcal{U}_{t,T}$ is the set of all stopping times taking values in $[t,T]$ and $S_t$ is the stock price that follows a geometric Brownian motion (GBM) given by
\begin{equation*}
    dS_t = (r - q) S_t dt + \sigma S_t dW_t^{\mathbb{Q}}, \quad S_0 > 0.
\end{equation*}
For each $t\in(0,T)$, since
$S_t$ is $\mathcal{G}_t$-measurable and both 
$s\to C^A(t,s)$ and $s\to P^A(t,s)$ are 
Borel measurable, it follows that
\begin{align*}
    V^{\rm ch}(t,S_t)
    &=\sup_{\theta \in \mathcal{U}_{t,T}}
    \mathbb{E}\left[e^{-r(\theta-t)} \max\left\{C^A(\theta,S_{\theta}),P^A(\theta,S_{\theta})\right\}\mid \mathcal{G}_t\right]
    \\
    &\ge \mathbb{E}\left[\max\left\{C^A(t,S_t),P^A(t,S_t)\right\}\mid \mathcal{G}_t\right]
    \\
    &=\max\left\{C^A(t,S_t),P^A(t,S_t)\right\}
\end{align*}
Similarly, since $\mathcal U_{T,T}=\{T\}$,
\[V^{\rm ch}(T,S_T)=\max\{C^A(T,S_T),P^A(T,S_T)\}\]
and this implies $V^{\rm ch}(T,s)=\max\{C^A(T,s),P^A(T,s)\}$ for all $s\in (0,\infty)$.
For each $t\in(0,T)$ and small $h>0$, since $\mathcal{G}_{t}\subset\mathcal{G}_{t+h}$ we deduce by the tower property for the conditional expectation that
\begin{align*}
    e^{-rt}V^{\rm ch}(t,S_t)
    &=\sup_{\theta \in \mathcal{U}_{t,T}}
    \mathbb{E}\left[e^{-r\theta} \max\left\{C^A(\theta,S_{\theta}),P^A(\theta,S_{\theta})\right\}\mid \mathcal{G}_t\right]
    \\
    &=\sup_{\theta \in \mathcal{U}_{t,T}}
    \mathbb{E}\left[\mathbb{E}\left[e^{-r\theta} \max\left\{C^A(\theta,S_{\theta}),P^A(\theta,S_{\theta})\right\}\mid \mathcal{G}_{t+h}\right]\mid \mathcal{G}_t\right]
    \\
    &\ge
    \mathbb{E}\left[\sup_{\theta \in \mathcal{U}_{t+h,T}}\mathbb{E}\left[e^{-r\theta} \max\left\{C^A(\theta,S_{\theta}),P^A(\theta,S_{\theta})\right\}\mid \mathcal{G}_{t+h}\right]\mid \mathcal{G}_t\right]
    \\
    &=\mathbb{E}\left[e^{-r(t+h)}V^{\rm ch}(t+h,S_{t+h})\mid \mathcal{G}_t\right].
\end{align*}
This yields $Z_t:=e^{-rt}V^{\rm ch}(t,S_t)$ is a supermartingale under $\mathbb{P}$.

Applying It\^o's formula onto $Z_t$, we obtain
\begin{align*}
    Z_{t+h}-Z_t
    &=\int_t^{t+h}e^{-ru}\Big\{\partial_t V^{\rm ch}(u,S_u)+\frac{\sigma^2}{2}S_u^2\partial_{ss}V^{\rm ch}(u,S_u)
    \\
    &\quad+(r-q)S_u\partial_s V^{\rm ch}(u,S_u)
    -rV^{\rm ch}(u,S_u)\Big\}du
     +\int_t^{t+h}e^{-ru}\sigma S_u\partial_sV^{\rm ch}(u,S_u)dW_u.
\end{align*}
Moreover, using that $Z_t$ is $\mathcal{G}_t$-measurable and a supermartingale,
\begin{align*}
    &\mathbb{E}\Big[\int_t^{t+h}e^{-ru}\big\{\partial_t V^{\rm ch}(u,S_u)+\frac{\sigma^2}{2}S_u^2\partial_{ss}V^{\rm ch}(u,S_u)
    \\
    &\quad +(r-q)S_u\partial_s V^{\rm ch}(u,S_u)-rV^{\rm ch}(u,S_u)\big\}du\mid\mathcal{G}_t\Big] =\mathbb{E}\left[Z_{t+h}-Z_t\mid\mathcal{G}_t\right]
    \le 0.
\end{align*}
Taking $h\to 0^+$, the Lebesgue differentiation theorem and the dominated convergence theorem yields
\[\partial_t V^{\rm ch}(t,S_t)+\frac{\sigma^2}{2}S_t^2\partial_{ss}V^{\rm ch}(t,S_t)+(r-q)S_t\partial_s V^{\rm ch}(t,S_t)-rV^{\rm ch}(t,S_t)\le 0
\quad \text{a.s. in }\ \Omega.\]
Since $S_t$ has a strictly positive density on $(0,\infty)$, we conclude
\[\partial_t V^{\rm ch}(t,s)+\mathfrak{L}V^{\rm ch}(t,s)\le 0
\quad \text{for a.e. }\ (t,s)\in (0,T)\times(0,\infty).\]

If $V^{\rm ch}(t,S_t)>\max\{C^A(t,S_t),P^A(t,S_t)\}$, there exists some small $h>0$ and a stopping time $\theta_t$ such that
$\theta_t\in \mathcal{U}_{t+h,T}$ and 
\[V^{\rm ch}(t,S_t)=\mathbb{E}\left[e^{-r(\theta_t-t)} \max\left\{C^A(\theta_t,S_{\theta_t}),P^A(\theta_t,S_{\theta_t})\right\}\mid \mathcal{G}_t\right].\]
Then, $V^{\rm ch}(t,S_t)$ satisfies
\begin{align*}
    e^{-rt}V^{\rm ch}(t,S_t)
    &=
    \mathbb{E}\left[e^{-r\theta_t} \max\left\{C^A(\theta_t,S_{\theta_t}),P^A(\theta_t,S_{\theta_t})\right\}\mid \mathcal{G}_t\right]
    \\
    &=
    \mathbb{E}\left[\mathbb{E}\left[e^{-r\theta_t} \max\left\{C^A(\theta_t,S_{\theta_t}),P^A(\theta_t,S_{\theta_t})\right\}\mid \mathcal{G}_{t+h}\right]\mid\mathcal{G}_t\right]
    \\
    &=\mathbb{E}\left[\sup_{\theta \in \mathcal{U}_{t+h,T}}\mathbb{E}\left[e^{-r\theta} \max\left\{C^A(\theta,S_{\theta}),P^A(\theta,S_{\theta})\right\}\mid \mathcal{G}_{t+h}\right]\mid\mathcal{G}_t\right]
    \\
    &=\mathbb{E}\left[e^{-r(t+h)}V^{\rm ch}(t+h,S_{t+h})\mid \mathcal{G}_t\right].
\end{align*}
This implies that $e^{-r(t\wedge\theta_t)}V^{\rm ch}(t\wedge\theta_t,S_{t\wedge\theta_t})$ is a martingale under $\mathbb{P}$.
Therefore, if $V^{\rm ch}(t,S_t)>\max\{C^A(t,S_t),P^A(t,S_t)\}$,
by applying the It\^o's formula, we can similarly obtain
\[\partial_t V^{\rm ch}(t,S_t)+\frac{\sigma^2}{2}S_t^2\partial_{ss}V^{\rm ch}(t,S_t)+(r-q)S_t\partial_s V^{\rm ch}(t,S_t)-rV^{\rm ch}(t,S_t)= 0.\]
Therefore, we deduce
\[\partial_t V^{\rm ch}(t,s)+\mathfrak{L}V^{\rm ch}(t,s)=0\]
for almost every $(t,s)\in(0,T)\times(0,\infty)$ satisfying $V^{\rm ch}(t,s)>\max\{C^A(t,s),P^A(t,s)\}$.

Combining all, our desired formulation for $V^{\rm ch}$ would be as follows:
\begin{equation}\label{V,Ito}
    \begin{cases}
        \partial_t V^{\rm ch}(t,s) + \mathfrak{L} V^{\rm ch}(t,s) \le 0,  
        \\
        \qquad \text{for }\ (t,s)\in (0,T)\times(0,\infty)\  \text{ with } \  V^{\rm ch}(t,s) = \max\{C^A(t,s),P^A(t,s)\},\\
        \partial_t V^{\rm ch}(t,s) + \mathfrak{L} V^{\rm ch}(t,s) = 0,
        \\
        \qquad \text{for }\ (t,s)\in (0,T)\times(0,\infty)\ \text{ with } \   V^{\rm ch}(t,s) > \max\{C^A(t,s),P^A(t,s)\},\\
        V^{\rm ch}(T,s) = \max\{C^A(T,s),P^A(T,s)\} \quad  \text{for } \ 0<s<\infty.
    \end{cases}
\end{equation}
\subsection{Verification}\label{Appendix.B}
Since $V^{\rm ch}\in W^{1,2}_{p,{\rm loc}}((0,T)\times(0,\infty))$,
we apply the It\^o's lemma for the Sobolev space (see \cite[Theorem 1, p.122]{KRY80}) to see that for any $\theta\in\mathcal{U}_{t,T}$,
\begin{align*}
    e^{-r(\theta-t)}&V^{\rm ch}(\theta,S_\theta)
    \\
    =&\ V^{\rm ch}(t,S_t)+\int_t^\theta e^{-r(u-t)}
    \Big(\partial_t V^{\rm ch}(u,S_u)+\frac{\sigma^2}{2}S^2_u\partial_{ss}V^{\rm ch}(u,S_u)
    \\
    &+(r-q)S_u\partial_s V^{\rm ch}(u,S_u)-rV^{\rm ch}(u,S_u)\Big)du
    +\int_t^\theta \sigma S_u\partial_s V^{\rm ch}(u,S_u)dW_u
    \\
    \le &\ V^{\rm ch}(t,S_t)+\int_t^\theta \sigma S_u\partial_s V^{\rm ch}(u,S_u)dW_u
\end{align*}
since $\partial_t V^{\rm ch}(t,s)+\mathfrak{L}V^{\rm ch}(t,s)\le 0$ for almost every $(t,s)\in(0,T)\times(0,\infty)$.
Note that the second estimate in Lemma~\ref{DV} implies that  $-1\le \partial_s V^{\rm ch}(t,s)\le 1$ for all $(t,s)\in (0,T)\times(0,\infty)$. This yields that
\begin{align}\label{Condition:L2}
    \mathbb{E}\left[\int_t^\theta \Big(e^{-r(u-t)}\sigma S_u\partial_sV^{\rm ch}(u,S_u)\Big)^2 du \right]
   \le \mathbb{E}\left[\int_t^\theta \Big(e^{-r(u-t)}\sigma S_u\Big)^2 du \right]<\infty.
\end{align}
Denote by
\[M_v:=\int_t^v \sigma S_u\partial_s V^{\rm ch}(u,S_u)dW_u,
\quad v\in[t,T].\] 
Then, by the result \eqref{Condition:L2}, we obtain that $(M_v)_{v\in[t,T]}$ is a $\mathcal{G}_v$-martingale.
Thus, taking conditional expectation $\mathbb{E}[\cdot \mid \mathcal{G}_t]$ on both sides, we get
\begin{align*}
    V^{\rm ch}(t,S_t)
    &\ge \mathbb{E}\left[ e^{-r(\theta-t)}V^{\rm ch}(\theta,S_\theta) \mid \mathcal{G}_t \right]
    \\
    &\ge \mathbb{E}\left[ e^{-r(\theta-t)}\max\{C^A(\theta,S_\theta),P^A(\theta,S_\theta)\} \mid \mathcal{G}_t \right]
\end{align*}
where the last inequality follows from the fact that $V^{\rm ch}(t,S_t)\ge \max\{C^A(t,S_t),P^A(t,S_t)\}$ holds for all $t\in(0,T)$.
Since $\theta\in \mathcal{U}_{t,T}$ is arbitrary, we deduce
\[V^{\rm ch}(t,S_t)\ge \sup_{\theta\in \mathcal{U}_{t,T}}\mathbb{E}\left[e^{-r(\theta-t)}\max\{C^A(\theta,S_\theta),P^A(\theta,S_\theta)\}\mid \mathcal{G}_t\right].\]
Define $\theta^*$ as
\begin{equation*}
    \theta^*:=\inf\{u\in[t,T]:V^{\rm ch}(u,S_u)=\max\{C^A(u,S_u),P^A(u,S_u)\}\}\wedge T.
\end{equation*}
Then, if $(t,S_t)$ lies in the region $\{V^{\rm ch}=\max\{C^A,P^A\}\}$ we have $\theta^*=t$ and thus,
\[V^{\rm ch}(t,S_t)=\max\{C^A(t,S_t),P^A(t,S_t)\}
=\mathbb{E}\left[ e^{-r(\theta^*-t)}\max\{C^A(\theta^*,S_{\theta^*}),P^A(\theta^*,S_{\theta^*})\} \mid \mathcal{G}_t \right].\]
If $(t,S_t)$ lies in the region $\{V^{\rm ch}>\max\{C^A,P^A\}\}$,
we have
\begin{equation}\label{V:C reg}
    \begin{cases}
        \partial_t V^{\rm ch}(u,S_u)+\frac{\sigma^2}{2}S^2_u\partial_{ss}V^{\rm ch}(u,S_u)+(r-q)S_u\partial_s V^{\rm ch}(u,S_u)-rV^{\rm ch}(u,S_u)=0,
        \\
        \qquad \text{for all }\ u\in(t,\theta^*),
        \\
        V^{\rm ch}(\theta^*,S_{\theta^*})=\max\{C^A(\theta^*,S_{\theta^*}),P^A(\theta^*,S_{\theta^*})\}.
    \end{cases}
\end{equation}
By \eqref{V,Ito} and \eqref{V:C reg}, we obtain that
\begin{align*}
    V^{\rm ch}(t,S_t)
    &=\mathbb{E}\left[ e^{-r(\theta^*-t)}V^{\rm ch}(\theta^*,S_{\theta^*}) \mid \mathcal{G}_t \right]
    \\
    &=\mathbb{E}\left[ e^{-r(\theta^*-t)}\max\{C^A(\theta^*,S_{\theta^*}),P^A(\theta^*,S_{\theta^*})\} \mid \mathcal{G}_t \right].
\end{align*}
Combining all, we conclude
\[V^{\rm ch}(t,S_t)=\sup_{\theta\in\mathcal{U}_{t,T}}\mathbb{E}\left[ e^{-r(\theta-t)}\max\{C^A(\theta,S_{\theta}),P^A(\theta,S_{\theta})\} \mid \mathcal{G}_t \right].\]

\subsection{Proof of Lemma \ref{lem:inequalityCP}}\label{Appendix.C}

Similar results and approaches used in the proof can be found in previous research; see, e.g., \cite{ZF09, ZF09-1}. Therefore, we shall omit the details of approximations.

\subsubsection*{Proof of Part (i)}
Let $\tilde{C}^A_{n,\varepsilon}$ and $\tilde{P}^A_{n,\varepsilon}$ be a solutions of the following equations with Dirichlet boundary conditions:
\begin{equation}\label{CP}
    \begin{cases}
\partial_{\zeta} \tilde{C}^A_{n,\varepsilon}-\mathcal{L} \tilde{C}^A_{n,\varepsilon}
+\beta_{c,\varepsilon}(\tilde{C}^A_{n,\varepsilon}-\varphi_\varepsilon(e^x-K_c))= 0
\quad \text{in }\ {\Omega}^n_{T_c} , \\
\tilde{C}^A_{n,\varepsilon}(\zeta,-n)=0\quad \text{and} \quad \tilde{C}^A_{n,\varepsilon}(\zeta,n)=e^n-K_c
\quad \text{for }\ \zeta \in [0,T_c],\\
\tilde{C}^A_{n,\varepsilon}(0,x)
=\varphi_\varepsilon(e^x-K_c) \quad \text{for }\ x\in(-n,n)
\end{cases}
\end{equation}
and
\begin{equation}\label{PP}
    \begin{cases}
\partial_{\zeta} \tilde{P}^A_{n,\varepsilon}
-\mathcal{L} \tilde{P}^A_{n,\varepsilon}
+\beta_{p,\varepsilon}(\tilde{P}^A_{n,\varepsilon}-\varphi_\varepsilon(K_p-e^x))= 0
\quad  \text{in }\ {\Omega}^n_{T_p}, \\
\tilde{P}^A_{n,\varepsilon}(\zeta,-n)=K_p-e^{-n}
\quad \text{and} \quad
\tilde{P}^A_{n,\varepsilon}(\zeta,n)=0
\quad \text{for }\ \zeta \in [0,T_p],\\
\tilde{P}^A_{n,\varepsilon}(0,x)
=\varphi_\varepsilon(K_p-e^x) \quad \text{for }\ x\in(-n,n).
\end{cases}
\end{equation}
where $\beta_{c,\varepsilon}$ and $\beta_{p,\varepsilon}$ are 
appropriate penalty functions,
${\Omega}^n_{T_c}:=(0,T_c)\times(-n,n)$ and 
${\Omega}^n_{T_p}:=(0,T_p)\times(-n,n)$. 
It can be seen that the limits $\tilde{C}^A_{n}:=\displaystyle\lim_{\varepsilon\to 0^+} \tilde{C}^A_{n,\varepsilon} $ and  $\tilde{P}^A_{n}:=\displaystyle\lim_{\varepsilon\to 0^+} \tilde{P}^A_{n,\varepsilon} $ are well-defined and satisfy the following obstacle problems, respectively:
\begin{equation*} 
    \begin{cases}
\partial_{\zeta} \tilde{C}^A_{n}(\zeta,x)
-\mathcal{L} \tilde{C}^A_{n}(\zeta,x) \ge 0
\quad \text{for }\ (\zeta,x)\in{\Omega}^n_{T_c} \ \ 
\text{with }\ \tilde{C}^A_n(\zeta,x)=(e^x-K_c)^+,
\\
\partial_{\zeta} \tilde{C}^A_{n}(\zeta,x)
-\mathcal{L} \tilde{C}^A_{n}(\zeta,x) = 0
\quad \text{for }\ (\zeta,x)\in{\Omega}^n_{T_c} \ \ 
\text{with }\ \tilde{C}^A_n(\zeta,x)>(e^x-K_c)^+,
\\
\tilde{C}^A_{n}(\zeta,-n)=0
\quad \text{and} \quad \tilde{C}^A_{n}(\zeta,n)=e^n-K_c
\quad \text{for }\ \zeta \in [0,T_c],\\
\tilde{C}^A_{n}(0,x)=(e^x-K_c)^+ \quad \text{for }\ x\in(-n,n)
\end{cases}
\end{equation*}
and
\begin{equation*}  
    \begin{cases}
\partial_{\zeta} \tilde{P}^A_{n}(\zeta,x)
-\mathcal{L} \tilde{P}^A_{n}(\zeta,x) \ge 0
\quad \text{for }\ (\zeta,x)\in{\Omega}^n_{T_p} \ \ 
\text{with }\ \tilde{P}^A_n(\zeta,x)=(K_p-e^x)^+,
\\
\partial_{\zeta} \tilde{P}^A_{n}(\zeta,x)
-\mathcal{L} \tilde{P}^A_{n}(\zeta,x) = 0
\quad \text{for }\ (\zeta,x)\in{\Omega}^n_{T_p} \ \ 
\text{with }\ \tilde{P}^A_n(\zeta,x)>(K_p-e^x)^+,
\\
\tilde{P}^A_{n}(\zeta,-n)=K_p-e^{-n} \quad \text{and} \quad \tilde{P}^A_{n,\varepsilon}(\zeta,n)=0
\quad \text{for }\ \zeta \in [0,T_p],\\
\tilde{P}^A_{n}(0,x)=(K_p-e^x)^+ \quad \text{for }\ x\in(-n,n).
\end{cases}
\end{equation*}
Observe that $e^x+2$ satisfies
\begin{equation*}
    \begin{cases}
    \partial_\zeta(e^x+2)-\mathcal{L}(e^x+2)+\beta_{c,\varepsilon}(e^x+2-\varphi_{\varepsilon}(e^x-K_c))> 0 \quad \text{for }\ (\zeta,x)\in\Omega^n_{T_c},
    \\ 
    e^{-n}+2 >0=\tilde{C}^A_{n,\varepsilon}(\zeta,-n)
    \quad \text{and} \quad
    e^n+2>e^n-K_c=\tilde{C}^A_{n,\varepsilon}(\zeta,n) 
    \quad \text{for }\ \zeta \in [0,T_c], 
    \\
    e^x+2 > \varphi_\varepsilon(e^x-K_c) = \tilde{C}^A_{n,\varepsilon}(0,x) \quad \text{for }\ x\in(-n,n)
\end{cases}
\end{equation*}
since $\beta_{c,\varepsilon}\in C^\infty(\mathbb{R})$ is a penalty function satisfying 
$\beta_{c,\varepsilon}(\lambda)=0$ for all $\lambda\geq \varepsilon$
and
$e^x+2> \varphi_{\varepsilon}(e^x-K_c)+\varepsilon$ holds for all $\varepsilon<1.$
Therefore, for each $0<\varepsilon<1$, we deduce by the comparison principle \cite[52p]{FRI64} that $\tilde{C}^A_{n,\varepsilon}\leq e^x+2$ in $\Omega^n_{T_c}$.

Similarly, $K_p+2$ satisfies
\begin{equation*}
    \begin{cases}
        \partial_\zeta(K_p+2)-\mathcal{L}(K_p+2)
        +\beta_{p,\varepsilon}(K_p+2-\varphi_{\varepsilon}(K_p-e^x)) > 0
        \quad \text{for }\ (\zeta,x)\in\Omega^n_{T_p},
        \\ K_p+2> K_p-e^{-n} = \tilde{P}^A_{n,\varepsilon}(\zeta,-n)
        \quad \text{and} \quad
        K_p+2 > 0=\tilde{P}^A_{n,\varepsilon} (\zeta,n) \quad \text{for }\ \zeta \in [0,T_p], 
        \\ K_p+2 > \varphi_\varepsilon(K_p-e^x) = \tilde{P}^A_{n,\varepsilon}(0,x) \quad \text{for }\ x\in(-n,n).
    \end{cases}
\end{equation*}
By the comparison principle \cite[52p]{FRI64},
$\tilde{P}^A_{n,\varepsilon}\leq K_p +2$ in $\Omega^n_{T_p}$ holds for all $0<\varepsilon<1$.
Take $\varepsilon\to 0^+$ to get
\[\tilde{C}^A_n\le e^x+2
\quad \text{in }\ \Omega_{T_c}^n \quad \text{and} \quad \tilde{P}^A_n\le K_p+2
\quad \text{in }\ \Omega_{T_p}^n.\]
It can be seen further that
$\displaystyle{\lim_{ n \to \infty}}\tilde{C}^A_n$
and
$\displaystyle{\lim_{ n \to \infty}}\tilde{P}^A_n$
satisfy \eqref{CO} and \eqref{PO}
on each compact subset of $\Omega_{T_c}$ and $\Omega_{T_p}$,
respectively.
Since the solution of each problem \eqref{CO} and \eqref{PO} is unique, we see that 
$C^A=\displaystyle{\lim_{ n \to \infty}}\tilde{C}^A_n$
in $\Omega_{T_c}$
and
$P^A=\displaystyle{\lim_{ n \to \infty}}\tilde{P}^A_n$
in $\Omega_{T_p}$.
Therefore, taking $n \rightarrow \infty$ yields the results in Part (i).

\subsubsection*{Proof of Part (ii)}
Let $\widehat{C}^A_{n,\varepsilon}$ and $\widehat{P}^A_{n,\varepsilon}$ be the unique solutions to the following equations with Neumann boundary conditions:
\begin{equation}\label{ChatA_P}
    \begin{cases}
\partial_{\zeta} \widehat{C}^A_{n,\varepsilon}
-\mathcal{L} \widehat{C}^A_{n,\varepsilon}
+\beta_{c,\varepsilon}(\widehat{C}^A_{n,\varepsilon}- \varphi_{\varepsilon}(e^x-K_c))
= 0 \quad \text{in }\ {\Omega}^n_{T_c},
\\
\partial_x\widehat{C}^A_{n,\varepsilon}(\zeta,-n)= 0 
\quad \text{and} \quad 
\partial_x\widehat{C}^A_{n,\varepsilon}(\zeta,n) =e^n
\quad \text{for }\ \zeta \in [0,T_c],
\\
\widehat{C}^A_{n,\varepsilon}(0,x)
=\varphi_\varepsilon(e^x-K_c) \quad \text{for }\ x\in(-n,n)
\end{cases}
\end{equation}
and
\begin{equation}\label{PhatA_P}
    \begin{cases}
\partial_{\zeta} \widehat{P}_{n,\varepsilon}^A
-\mathcal{L} \widehat{P}_{n,\varepsilon}^A
+\beta_{p,\varepsilon}(\widehat{P}^A_{n,\varepsilon}- \varphi_{\varepsilon}(K_p-e^x)) =0
\quad \text{in }\ {\Omega}^n_{T_p},
\\
\partial_x\widehat{P}_{n,\varepsilon}^A(\zeta,-n)= -e^{-n} 
\quad \text{and} \quad \partial_x\widehat{P}_{n,\varepsilon}^A(\zeta,n)= 0
\quad \text{for }\ \zeta \in [0,T_p],
\\
\widehat{P}_{n,\varepsilon}^A(0,x)
=\varphi_\varepsilon(K_p-e^x) \quad \text{for }\ x\in(-n,n).
\end{cases}
\end{equation}
Then $\widehat{C}^A_{n} := \displaystyle\lim_{\varepsilon\to 0^+} \widehat{C}^A_{n,\varepsilon}$ and $\widehat{P}^A_{n} := \displaystyle\lim_{\varepsilon\to 0^+} \widehat{P}^A_{n,\varepsilon}$ are well-defined
and solutions to the following obstacle problems:
\begin{equation*} 
    \begin{cases}
\partial_{\zeta} \widehat{C}^A_{n}(\zeta,x)-\mathcal{L} \widehat{C}^A_{n}(\zeta,x) \ge 0
\quad \text{for }\ (\zeta,x)\in{\Omega}^n_{T_c} \ \ 
\text{with }\ \widehat{C}^A_n(\zeta,x)=(e^x-K_c)^+,
\\
\partial_{\zeta} \widehat{C}^A_{n}(\zeta,x)-\mathcal{L} \widehat{C}^A_{n}(\zeta,x) = 0 
\quad \text{for }\ (\zeta,x)\in{\Omega}^n_{T_c} \ \ 
\text{with }\ \widehat{C}^A_n(\zeta,x)>(e^x-K_c)^+,
\\
\partial_x\widehat{C}^A_{n}(\zeta,-n)=0
\quad \text{and} \quad \partial_x\widehat{C}^A_{n}(\zeta,n)=e^n
\quad \text{for }\ \zeta \in [0,T_c],\\
\widehat{C}^A_{n}(0,x)=(e^x-K_c)^+ \quad \text{for }\ x\in(-n,n)
\end{cases}
\end{equation*}
and
\begin{equation*} 
    \begin{cases}
\partial_{\zeta} \widehat{P}^A_{n}(\zeta,x) -\mathcal{L}\widehat{P}^A_{n}(\zeta,x)  \ge 0
\quad \text{for }\ (\zeta,x)\in{\Omega}^n_{T_p} \ \ 
\text{with }\ \widehat{P}^A_n(\zeta,x)=(K_p-e^x)^+,
\\
\partial_{\zeta} \widehat{P}^A_{n}(\zeta,x)-\mathcal{L} \widehat{P}^A_{n}(\zeta,x) = 0
\quad \text{for }\ (\zeta,x)\in{\Omega}^n_{T_p} \ \ 
\text{with }\ \widehat{P}^A_n(\zeta,x)>(K_p-e^x)^+,
\\
\partial_x \widehat{P}^A_{n}(\zeta,-n)=-e^{-n}\quad \text{and} \quad \partial_x \widehat{P}^A_{n,\varepsilon}(\zeta,n)=0
\quad \text{for }\ \zeta \in [0,T_p],\\
\widehat{P}^A_{n}(0,x)=(K_p-e^x)^+ \quad \text{for }\ x\in(-n,n).
\end{cases}
\end{equation*}
Note that 
$\widehat{C}^A_{n}$ and
$\widehat{P}^A_{n}$
are obtained by letting $\varepsilon \to 0^+$,
where
$\widehat{C}^A_{n,\varepsilon}$ and
$\widehat{P}^A_{n,\varepsilon}$
converge weakly in $W^{1,2}_p$
for each $1<p<\infty$
in the domains $\Omega_{T_c}$ and $\Omega_{T_p}$,
respectively.
This yields that $\widehat{C}^A_{n}\in W^{1,2}_p(\Omega_{T_c})$ and $\widehat{P}^A_{n}\in W^{1,2}_p(\Omega_{T_p})$ for each $1<p<\infty$ and the Sobolev embedding theorem further implies that
$\widehat{C}^A_{n,\varepsilon}$ and
$\widehat{P}^A_{n,\varepsilon}$ are $C^{\frac{\alpha}{2},\alpha}$
functions for some $\alpha\in(0,1)$.
Based on the Schauder theory for linear parabolic equations and a bootstrap argument, we deduce that the functions $\widehat{C}^A_{n,\varepsilon}$ and
$\widehat{P}^A_{n,\varepsilon}$ are smooth.
Thus, differentiating \eqref{ChatA_P} and \eqref{PhatA_P}
yields that
 $\partial_x\widehat{C}^A_{n,\varepsilon}$ and $\partial_x\widehat{P}^A_{n,\varepsilon}$ satisfies
\begin{equation*}
    \begin{cases}
\partial_{\zeta} (\partial_x\widehat{C}^A_{n,\varepsilon})
-\mathcal{L} (\partial_x\widehat{C}^A_{n,\varepsilon})
+\beta'_{c,\varepsilon}(\cdots)\partial_x\widehat{C}^A_{n,\varepsilon}
= \beta'_{c,\varepsilon}(\cdots) \varphi_\varepsilon'(e^x-K_c) e^x\geq 0 \quad
\text{in }\ {\Omega}^n_{T_c},
\\
\partial_x\widehat{C}^A_{n,\varepsilon}(\zeta,-n)= 0 \quad \text{and} \quad \partial_x\widehat{C}^A_{n,\varepsilon}(\zeta,n) =e^n \geq 0
\quad \text{for }\ \zeta \in [0,T_c],
\\
\partial_x\widehat{C}^A_{n,\varepsilon}(0,x)=\varphi'_\varepsilon(e^x-K_c)e^x
\geq 0 \quad \text{for }\ x\in(-n,n)
\end{cases}
\end{equation*}
and
\begin{equation*}
    \begin{cases}
\partial_{\zeta} (\partial_x\widehat{P}_{n,\varepsilon}^A)
-\mathcal{L} (\partial_x\widehat{P}_{n,\varepsilon}^A)
+\beta'_{p,\varepsilon}(\cdots)\partial_x\widehat{P}_{n,\varepsilon}^A
= - \beta'_{p,\varepsilon}(\cdots) \varphi_\varepsilon'(K_p-e^x)e^x\leq 0 \quad
\text{in }\ {\Omega}^n_{T_p},
\\
\partial_x\widehat{P}_{n,\varepsilon}^A(\zeta,-n)= -e^{-n} \leq 0\quad \text{and} \quad \partial_x\widehat{P}_{n,\varepsilon}^A(\zeta,n)= 0
\quad \text{for }\ \zeta \in [0,T_p],
\\
\partial_x\widehat{P}_{n,\varepsilon}^A(0,x)=-\varphi'_\varepsilon(K_p-e^x)e^x
\leq 0 \quad \text{for }\ x\in(-n,n)
\end{cases}
\end{equation*}
respectively.
Applying the maximum principle for each equation, we deduce $\partial_x\widehat{C}^A_{n,\varepsilon}\geq 0$ in ${\Omega}^n_{T_c}$
and $\partial_x\widehat{P}_{n,\varepsilon}^A\leq 0$ in ${\Omega}^n_{T_p}$.
Furthermore, since $-\mathcal{L}e^x+\beta'_{c,\varepsilon}(\cdots)e^x\ge 0$, we obtain
\begin{equation*}
    \begin{cases}
\partial_{\zeta} (\partial_x\widehat{C}^A_{n,\varepsilon}-e^x)
-\mathcal{L} (\partial_x\widehat{C}^A_{n,\varepsilon}-e^x)
+\beta'_{c,\varepsilon}(\cdots)(\partial_x\widehat{C}^A_{n,\varepsilon}-e^x)
\leq 0
\quad \text{in }\ {\Omega}^n_{T_c},
\\
\partial_x\widehat{C}^A_{n,\varepsilon}(\zeta,-n)-e^{-n}= -e^{-n} \leq 0
\quad \text{and} \quad 
\partial_x\widehat{C}^A_{n,\varepsilon}(\zeta,n)-e^{n}= 0
\quad \text{for }\ \zeta \in [0,T_c],
\\
\widehat{C}^A_{n,\varepsilon}(0,x)-e^x=\{\varphi'_\varepsilon(e^x-K_c)-1\}e^x
\leq 0 \quad \text{for }\ x\in(-n,n)
\end{cases}
\end{equation*}
and
\begin{equation*}
    \begin{cases}
\partial_{\zeta} (\partial_x\widehat{P}_{n,\varepsilon}^A+e^x)
-\mathcal{L} (\partial_x\widehat{P}_{n,\varepsilon}^A+e^x)
+\beta'_{p,\varepsilon}(\cdots)(\partial_x\widehat{P}_{n,\varepsilon}^A+e^x)
\geq 0 \quad 
\text{in }\ {\Omega}^n_{T_p},
\\
\partial_x\widehat{P}_{n,\varepsilon}^A(\zeta,-n)+e^{-n}= 0\quad \text{and} \quad \partial_x\widehat{P}_{n,\varepsilon}^A(\zeta,n)+e^{n}=e^{n}\geq 0
\quad \text{for }\ \zeta \in [0,T_p],
\\
\widehat{P}_{n,\varepsilon}^A(0,x)+e^x=\{1-\varphi'_\varepsilon(K_p-e^x)\}e^x
\geq 0 \quad \text{for }\ x\in(-n,n).
\end{cases}
\end{equation*}
Therefore, by the maximum principle, we deduce 
$\partial_x\widehat{C}^A_{n,\varepsilon}\leq e^x$
and  $\partial_x\widehat{P}_{n,\varepsilon}^A\geq -e^x$.
Note by the Sobolev embedding theorem \cite[Chapter II. Lemma 3.3]{LSU68} that $\partial_x\widehat{C}^A_{n,\varepsilon}$ and $\partial_x\widehat{P}^A_{n,\varepsilon}$ converge to
$\partial_x\widehat{C}^A_n$ and $\partial_x\widehat{P}^A_n$ uniformly 
in $\Omega_{T_c}^n$ and $\Omega_{T_p}^n$, respectively, 
as $\varepsilon\rightarrow0^+$. Hence 
\[0 \le \partial_x\widehat{C}^A_n\le e^x 
\quad \text{in }\ \Omega_{T_c}^n  
\quad \text{and} \quad
-e^x\le \partial_x\widehat{P}^A_n\le 0
\quad \text{in }\ \Omega_{T_p}^n.\]
Finally, by the uniqueness, passing $n\to \infty$ yields the desired inequalities for $\partial_x{C}^A$ and $\partial_x{P}^A$.

\subsubsection*{Proof of Part (iii)}
For each fixed $\delta>0$, we first define 
$\tilde{C}_n^{\delta}$ by
 \[\tilde{C}_n^{\delta}(\zeta,x):=\tilde{C}_n^A(\zeta+\delta,x)
 \]
for all $(\zeta,x)\in \Omega_{T_c-\delta}$.
Then, we see that $\tilde{C}_n^{\delta}$ satisfies the following obstacle problem:
\begin{equation*}
    \begin{cases}
\partial_{\zeta} \tilde{C}_n^{\delta}(\zeta,x)
-\mathcal{L} \tilde{C}_n^{\delta}(\zeta,x) \ge 0
\quad \text{for}\ \  (\zeta,x)\in{\Omega}^n_{T_c-\delta}\ \
\text{with }\ \tilde{C}_n^{\delta}(\zeta,x)=(e^x-K_c)^+,
\\
\partial_{\zeta} \tilde{C}_n^{\delta}(\zeta,x)
-\mathcal{L} \tilde{C}_n^{\delta}(\zeta,x) = 0
\quad \text{for}\ \  (\zeta,x)\in{\Omega}^n_{T_c-\delta}\ \ 
\text{with }\ \tilde{C}_n^{\delta}(\zeta,x)>(e^x-K_c)^+,
\\
\tilde{C}_n^{\delta}(\zeta,-n)=0
\quad \text{and} \quad \tilde{C}_n^{\delta}(\zeta,n)=e^n-K_c
\quad \text{for }\ \zeta \in [0,T_c-\delta],\\
\tilde{C}_n^{\delta}(0,x)=\tilde{C}^A_n(\delta,x)\ge (e^x-K_c)^+ \quad \text{for }\ x\in(-n,n).
\end{cases}
\end{equation*}
Since $\tilde{C}_n^{\delta}(0,x) \ge (e^x-K_c)^+ = \tilde{C}^A_n(0,x)$,
by the comparison principle for the variational inequality, (see \cite[Theorem 1.2]{YY08}) we deduce
\[\tilde{C}^A_n(\zeta+\delta,x)=\tilde{C}_n^{\delta}(\zeta,x)
\ge \tilde{C}^A_n(\zeta,x)
\quad \text{for all }\ (\zeta,x)\in\Omega^n_{T_c-\delta}.\]
Take $n\rightarrow \infty$ and the uniqueness result yields that 
\[{C}^A(\zeta+\delta,x) \ge {C}^A(\zeta,x)
\quad \text{for all }\ (\zeta,x)\in\Omega_{T_c-\delta}.\]
Observe that ${C}^A$ is differentiable almost everywhere
with respect to $\tau$ in $\Omega_{T_c}$,
and that $\partial_\tau C^A \ge 0$ almost everywhere.
In particular, this condition satisfies the hypothesis of
\cite[Theorem 1.1]{Blanchet2006-1}.
Hence, $\partial_\tau C^A$ is continuous in $\Omega_{T_c}$,
and consequently,
$\partial_\tau C^A \ge 0$ everywhere in $\Omega_{T_c}$.

The same argument applies to $P^A$.
By following the same procedure, it can be obtained that
$\partial_\tau P^A \ge 0$ almost everywhere in $\Omega_{T_p}$.
Theorem 1.1 in \cite{Blanchet2006-1} yields that
$\partial_\tau P^A$ is continuous,
and hence nonnegative everywhere in $\Omega_{T_p}$.
\subsubsection*{Proof of Part (iv)}
The proof for Part (iv) can be found in \cite{Qiu2018}.
\qed

\subsection{Conditions of the operator \texorpdfstring{$\mathcal{F}$}{F} in Theorem \ref{thm:existenceVnepsilon}}\label{Appendix.D}

Let us prove that the operator $\mathcal{F}:\mathcal{A} \to \mathcal{X}$ in the proof of Theorem~\ref{thm:existenceVnepsilon} satisfies 
\begin{itemize}
    \item[\textbf{(1)}] $\mathcal{F}(\mathcal{A})\subset \mathcal{A}$,
    \item[\textbf{(2)}] $\mathcal{F}$ is continuous,
    \item[\textbf{(3)}] $\mathcal{F}(\mathcal{A})$ is precompact in $\mathcal{X}$.
\end{itemize}

\medskip
\noindent\textbf{(1)}
Fix any $w\in \mathcal{A}$ and consider $u=\mathcal{F}(w).$ 
Note that $J_{\varepsilon}(\tau,x)\geq 0,$ $-\beta_\varepsilon(\cdots)\geq 0$ and $\  \nu\cdot D_xu(\tau,x) \leq 0 $
for each $(\tau,x)$ at the lateral boundary and
the inward pointing normal vector $\nu$ at $(\tau,x).$ 
Then by the comparison principle \cite[13p]{LIE96}, we have $u\geq0.$

\medskip
\noindent\textbf{(2)}
To obtain the continuity of $\mathcal{F},$ it suffices to prove the following:
\begin{align*}
\text{Let} \ \{w_j\}\  &\text{be a sequence in}\
\mathcal{A}\ \  \text{such that} \ 
\displaystyle{\lim_{ j \to \infty}w_j}=w.\\
&\text{Then}\  
\displaystyle{\lim_{j \to \infty}\mathcal{F}(w_j)}=\mathcal{F}(w).
\end{align*}
Let $u_j:=\mathcal{F}(w_j)\ \text{for each } j\in \mathbb{N} \  \text{and} \ u:=\mathcal{F}(w).$
By subtracting the two equations that $u$ and $u_j$ satisfy respectively, we have
\begin{equation*}
    \begin{cases}
        \partial_\tau (u-u_j) -\mathcal{L}(u-u_j)=-\beta_\varepsilon'(\cdots)\ (w-w_j)
        \quad \text{in }\ \Omega_T^n,
        \\
        \partial_x(u-u_j)(\tau,-n)=0 \quad \text{and}
        \quad \partial_x(u-u_j)(\tau,n)=0 \quad \text{for }\ \tau\in[0,T], \\
        (u-u_j)(0,x)=0 \quad \text{for }\ x \in (-n,n).
    \end{cases}    
\end{equation*}
Then the $W^{1,2}_p$-estimate combined with the embedding theorem implies that 
\[\Vert u-u_j \Vert_{C(\overline{\Omega_T^n})}\leq \mathcal{K}\Vert\beta_\varepsilon'(\cdots)(w-w_j)
\Vert_{L^\infty(\Omega_T^n)}\] for some constant $\mathcal{K}>0$.
Therefore, it follows that $\displaystyle{\lim_{j \to \infty}u_j}= u.$

\medskip
\noindent\textbf{(3)} Let $\{u_j\}_{j=1}^\infty$ be a sequence in the closure of $\mathcal{F}(\mathcal{A})$ with respect to the $\Vert\cdot\Vert_{C(\overline{\Omega_T^n})}$ norm.

\medskip
\noindent Case 1. For each $u_j\in \mathcal{F}(\mathcal{A})$, there exists a sequence of functions $\{w_j\}_{j=1}^\infty$ such that $\mathcal{F}(w_j)=u_j$.
By applying the $W^{1,2}_p$ estimate on $u_j$, we obtain
\begin{align*}
\Vert u_j \Vert_{W^{1,2}_p(\Omega^n_T)}
&\leq \mathcal{K} \big(\Vert J_{\varepsilon}\Vert_{W^2_p((-n,n))}
+\Vert\beta_\varepsilon(w_j-J_{\varepsilon})\Vert_{L^p(\Omega^n_T)}
+(e^n+e^{-n})T\big) \\
&\leq \mathcal{K} \big(\Vert J_{\varepsilon}\Vert_{W^2_p((-n,n))}
+\Vert\beta_\varepsilon(-J_{\varepsilon})\Vert_{L^p(\Omega^n_T)}+(e^n+e^{-n})T\big)
\end{align*}
for some constant $\mathcal{K}>0$ that does not depend on $j\in\mathbb{N}$ and $\alpha\in(0,1)$.
Note that the last inequality is due to the monotonicity of $\beta_\varepsilon$
and $w\geq0$.
Thus, combined with the embedding theorem it can be seen that
\[\Vert u_j\Vert_{C^{\frac{\alpha}{2},\alpha}(\overline{\Omega_T^n})}\le \mathcal{K}'\]
for some constant $\mathcal{K}'$ that does not depend on $j\in\mathbb{N}$ and $\alpha\in(0,1)$.

\medskip
\noindent Case 2.
For each fixed $u_i\notin \mathcal{F}(\mathcal{A})$,
there exists a sequence $\{u^{(i)}_l\}_{l=1}^\infty
\subset\mathcal{F}(\mathcal{A})$ such that
\[u^{(i)}_l \rightarrow u_i \quad \text{in }\ C(\overline{\Omega_T^n})  \ \text{ as }\ l\rightarrow\infty.\]
Hence, there exists $\{w^{(i)}_l\}_{l=1}^\infty$ in $\mathcal{A}$ such that $\mathcal{F}(w^{(i)}_l)=u^{(i)}_l$. Again, by the $W^{1,2}_p$ estimate, the embedding theorem, and the monotonicity of $\beta_\varepsilon$, we deduce
$\Vert u_l^{(i)}\Vert_{C^{\frac{\alpha}{2},\alpha}(\overline{\Omega_T^n})}\le \mathcal{K}'$ for some $\alpha\in(0,1)$.
Take $l\rightarrow\infty$ to get
$\Vert u_i\Vert_{C^{\frac{\alpha}{2},\alpha}(\overline{\Omega_T^n})}\le \mathcal{K}'$.

Combining both cases, we conclude
\[\Vert u_j\Vert_{C^{\frac{\alpha}{2},\alpha}(\overline{\Omega_T^n})}\le \mathcal{K}'\] for all $j\in\mathbb{N}$.

By the Arzela-Ascoli theorem, there exists a subsequence $\{u_{j_k}\}_{k=1}^\infty \subset \{u_j\}_{j=1}^\infty$ and $u\in C(\overline{\Omega_T^n})$
such that
\[u_{j_k}\rightarrow u \quad \text{in }\ C(\overline{\Omega_T^n}) \quad \text{as }\ k\rightarrow\infty.\]
Observe that $u$ is contained in the closure of $\mathcal{F}(\mathcal{A})$ by the assumption. This proves that $\mathcal{F}(\mathcal{A})$ is precompact in $\mathcal{X}=C(\overline{\Omega_T^n}).$
\qed

\section*{Acknowledgment}
J. Jeon was supported by NRF grant funded by MSIT (RS-2023-00212648). J.Ok was supported by NRF grant funded by MSIT (NRF-2022R1C1C1004523).


\begin{thebibliography}{99}

\bibitem{BX09}
E.~Bayraktar and H.~Xing, Analysis of the optimal exercise boundary of American options for jump diffusions, \textit{SIAM Journal on Mathematical Analysis}, 2009, Volume 41, Issue 2: 825–860. 

\bibitem{Blanchet2006}
A. Blanchet, On the regularity of the free boundary in the parabolic obstacle problem: Application to American options, Nonlinear Analysis: Theory, Methods \& Applications, Volume 65, Number 7, (2006), 1362-1378.

\bibitem{Blanchet2006-1}
A. Blanchet, J. Dolbeault, and R. Monneau, On the continuity of the time derivative of the
solution to the parabolic obstacle problem with variable coefficients. Journal de Math\'ematiques Pures et Appliqu\'ees
, (2006), 85(3):371–414.

\bibitem{Chen2012}
X. Chen and H. Cheng, Regularity of the free boundary for the American put option, Discrete and Continuous Dynamical Systems - B, Volume 17, Number 6, (2012), 1751-1759.

\bibitem{CYW11}
X.~Chen, F.~Yi, and L.~Wang, American lookback option with fixed strike price—2-D parabolic variational inequality, \textit{Journal of Differential Equations}, 2011, Volume 251, Issue 11: 3063–3089. 


\bibitem{Detemple2005}
J. Detemple, \textit{American-Style Derivatives: Valuation and Computation}, Chapman and Hall/CRC, (2005).

\bibitem{DETEMPLE09}
J. Detemple and T. Emmerling, American chooser options: Exercise region, early exercise premium, and integral equations, Journal of Economic Dynamics and Control, Volume 33, Number 1, (2009), 128-153.

\bibitem{FRI64}
A. Friedman, Partial Differential Equations of Parabolic Type,
Robert E.Krieger Publishing Company, Inc., 1964

\bibitem{FRI75}
A. Friedman, Parabolic variational inequalities in one space dimension and smoothness of the free boundary, J. Funct. Anal. 18 (1975) 151–176.

\bibitem{FRI82}
A. Friedman, Variational Principles and Free-Boundary Problems, John Wiley \& Sons, Inc.,
 New York, 1982.

\bibitem{DN01}
D. Gilbarg, N.S. Trudinger, 
Elliptic Partial Differential Equations of Second Order, 
Springer, (2001)


\bibitem{JJ19}
J. Jeon and J. Oh, Valuation of American Strangle Option: Variational Inequality Approach,
Discrete and Continuous Dynamical Systems, Series B, Volume 24, Number 2, (2019), 755-781

\bibitem{JIA05}
L.~Jiang, \textit{Mathematical Modeling and Methods of Option Pricing}, World Scientific Publishing, Singapore, 2005.


\bibitem{KRY80}
N.~Krylov, \textit{Controlled Diffusion Processes}, Springer-Verlag, New York, 1980.


\bibitem{LSU68}
O.A. Ladyzenskaya, V.A. Solonnikov and N.N. Ural’ceva, Linear and Quasi-linear Equations of Parabolic Type, Amer.
Math. Soc., Providence, RI, 1968.

\bibitem{PS09}
P. Laurence, S. Salsa, Regularity of the Free Boundary of an American Option on Several Assets, Communications on Pure and Applied Mathematics 62(7): (2009), 969 - 994.

\bibitem{LIE96}
G.M. Lieberman, Second Order Parabolic Differential Equations, World Scientific Publishing
Co., Pte., Ltd., 1996.

\bibitem{FRA04}
F.D. Lio, Remarks on the Strong Maximum Principle for Viscosity Solutions to Fully Nonlinear
Parabolic Equations, Communications on Pure and Applied Analysis, Volume 3, Number 3, 
(2004), 395-415


\bibitem{PAS11}
A.~Pascucci, \textit{PDE and Martingale Methods in Option Pricing}, Bocconi \& Springer Series, Springer Milan, 2011. 

\bibitem{Peskir2005}
G. Peskir, On the American option problem, Mathematical Finance, Volume 15, Number 1, (2005), 169-181.

\bibitem{Qiu2018}
S. Qiu and S. Mitra, Mathematical properties of American chooser options, International Journal of Theoretical and Applied Finance, Volume 21, Number 8, (2018), 1850062.

\bibitem{GL05}
 G. Rapuch, American options and the free boundary exercise region: a PDE approach,
 CREST Finance-Assurance, 15, boulevard Gabriel P´eri, 92245 Malakoff Cedex, France,
 Interfaces and Free Boundaries 7 (2005), 79–98

\bibitem{Schaeffer1976}
David G. Schaeffer, A New Proof of the Infinite Differentiability of the Free Boundary in the Stefan Problem,
Journal of Differential Equations, 20, 266-269 (1976)


\bibitem{YJB06}
C.~Yang, L.~Jiang, and B.~Bian, Free boundary and American options in a jump-diffusion model, \textit{European Journal of Applied Mathematics}, 2006, Volume 17, Issue 1: 95–127.

\bibitem{YY08}
Z. Yang and H. Yan, A comparison principle of a system of variational inequalities in unbounded set, Journal of South China Normal University. Natural Science Edition, (2008).

\bibitem{ZF09}
Z. Yang and F. Yi, A Variational Inequality arising from American Installment Call Options Pricing,
J. Math. Anal. Appl. 357 (2009), 54–68

\bibitem{ZF09-1}
Z. Yang and F. Yi, Valuation of European Installment Put Option: Variational Inequality Approach,
Communications in Contemporary Mathematics,
Vol. 11, No. 2 (2009), 279–307

\bibitem{YANG2006}
Z. Yang, F. Yi, and M. Dai, A parabolic variational inequality arising from the valuation of strike reset options, Journal of Differential Equations, Volume 230, Number 2, (2006), 481-501.

\bibitem{Yi2008}
F. Yi, Z. Yang, and X. Wang, A variational inequality arising from European installment call options pricing, SIAM Journal on Mathematical Analysis, Volume 40, Number 1, (2008), 306-326.

\end{thebibliography}
\end{document}